\newcommand\reallywidehat[1]{%
\savestack{\tmpbox}{\stretchto{%
  \scaleto{%
    \scalerel*[\widthof{\ensuremath{#1}}]{\kern-.6pt\bigwedge\kern-.6pt}%
    {\rule[-\textheight/2]{1ex}{\textheight}}
  }{\textheight}%
}{0.5ex}}%
\stackon[1pt]{#1}{\tmpbox}%
}
\definecolor{myred}{rgb}{0.75,0,0}
\definecolor{mygreen}{rgb}{0,0.5,0}
\definecolor{myblue}{rgb}{0,0,0.65}
\theoremstyle{plain}
\newtheorem{theorem}[subsubsection]{Theorem}
\newtheorem{proposition}[subsubsection]{Proposition}
\newtheorem{proposition-definition}[subsubsection]{Proposition-Definition}
\newtheorem{lemma}[subsubsection]{Lemma}
\newtheorem{corollary}[subsubsection]{Corollary}
\theoremstyle{definition}
\newtheorem{definition}[subsubsection]{Definition}
\newtheorem{remark}[subsubsection]{Remark}
\newtheorem{question}[subsubsection]{Question}
\newtheorem{conjecture}[subsubsection]{Conjecture}
\newtheorem{statement}[subsubsection]{Statement}
\theoremstyle{remark}
\numberwithin{equation}{section}
\newcommand\nc{\newcommand}
\nc\on{\operatorname}
\nc\renc{\renewcommand}
\DeclareMathOperator\rk{rk}
\DeclareMathOperator\Mod{Mod}
\DeclareMathOperator\id{id}
\DeclareMathOperator\Jac{Jac}
\DeclareMathOperator{\Sp}{Sp}
\DeclareMathOperator{\Pic}{Pic}
\DeclareMathOperator{\ch}{ch}
\DeclareMathOperator{\td}{td}
\DeclareMathOperator{\VMHS}{VMHS}
\DeclareMathOperator{\Ext}{Ext}
\DeclareMathOperator{\Alb}{Alb}
\DeclareMathOperator{\Aut}{Aut}
\DeclareMathOperator{\Gal}{Gal}
\DeclareMathOperator{\Spec}{Spec}
\newcommand{\Z}{\mathbb{Z}}
\newcommand{\Q}{\mathbb{Q}}
\newcommand{\C}{\mathbb{C}}
\title{Section conjectures over $\C$ and Kodaira fibrations}
\author{Simon Shuofeng Xu\\ with an appendix by Seraphina Eun Bi Lee and Carlos A. Serv\'an}
\address{Department of Mathematics, University of Toronto}
\email{shuofeng.xu@mail.utoronto.ca}
\address{Department of Mathematics, Harvard University}
\email{slee@math.harvard.edu}
\address{Department of Mathematics, University of Chicago}
\email{cmarceloservan@uchicago.edu}
\date{\today}
\begin{document}

\begin{abstract}
In this paper we propose and study topological and Hodge theoretic analogues of Grothendieck's section conjecture over the complex numbers. We study these questions in the context of family of curves, in particular Kodaira fibrations, and in the context of the family of Jacobians associated to a Kodaira fibration. We showed that in the case of family of curves, both the topological and Hodge-theoretic analogues of the injectivity part of the section conjecture holds, and that the topological analogue of the surjectivity part of the section conjecture does not hold in general for families of curves (proven in the appendix written by Lee and Serv\'{a}n) and families of Jacobians.
\end{abstract}

\maketitle
\setcounter{tocdepth}{1}

\tableofcontents

\section{Introduction}\label{section:introduction}
In this paper, we would like to propose and study some analogues of Grothendieck's section conjecture over the complex numbers. To explain these analogues, we first recall the anabelian philosophy which motivates the present work. In \cite{Gro97}, Grothendieck conjectured that there exists a special class of schemes, called the anabelian schemes, defined over a field $k$ that is finitely generated over $\Q$, whose behaviour is controlled by an associated short exact sequence of \'{e}tale fundamental groups:
\begin{equation}
\label{etale SES}
1\to \pi_1^{\text{\'{e}t}}(X_{\overline{k}})\to \pi_1^{\text{\'{e}t}}(X)\to \Gal(\overline{k}/k)\to 1\end{equation}
Loosely speaking, this means that maps between anabelian schemes $X$ and $Y$ are in bijective correspondence with conjugacy classes of maps of extensions:
\[\begin{tikzcd}
	1 & {\pi_1^{\text{\'{e}t}}(X_{\overline{k}})} & {\pi_1^{\text{\'{e}t}}(X)} & {\Gal(\overline{k}/k)} & 1 \\
	1 & {\pi_1^{\text{\'{e}t}}(Y_{\overline{k}})} & {\pi_1^{\text{\'{e}t}}(Y)} & {\Gal(\overline{k}/k)} & 1
	\arrow[from=1-1, to=1-2]
	\arrow[from=1-2, to=1-3]
	\arrow["{\bar{f}}", from=1-2, to=2-2]
	\arrow[from=1-3, to=1-4]
	\arrow["f", from=1-3, to=2-3]
	\arrow[from=1-4, to=1-5]
	\arrow["{=}", from=1-4, to=2-4]
	\arrow[from=2-1, to=2-2]
	\arrow[from=2-2, to=2-3]
	\arrow[from=2-3, to=2-4]
	\arrow[from=2-4, to=2-5]
\end{tikzcd}\]
where two such maps $(f,\bar{f})$ and $(g,\bar{g})$ are conjugate if their images are conjugate by an element of $\pi_1^{\text{\'{e}t}}(Y_{\overline{k}})$.
\begin{remark}
	Recall that to define the \'{e}tale fundamental group of $X$, one needs a choice a geometric point of $X$. The equivalence relation on the set of morphisms of extensions is introduced precisely to account for this choice. This also justifies the omission of base points in our notation.
\end{remark}
Furthermore, Grothendieck conjectured that the class of anabelian schemes should satisfy the following properties (see \cite{Fal98}):
\begin{enumerate}
	\item it should contain all hyperbolic curves
	\item it should contain the moduli stacks $\mc{M}_{g,n}$ of smooth projective curves of genus $g$ with $n$ marked points 
	\item it should be closed under taking fibrations, i.e., if $f:X\to Y$ is a smooth proper map such that both $Y$ and all the fibers $X_y$ are also in this class, then so is $X$.
\end{enumerate}
Now if one also believes that a point $\Spec k$ is anabelian, then one arrives at the Grothendieck's section conjecture for smooth proper curves of genus $g\geq 2$
\begin{conjecture}[Grothendieck's section conjecture]
\label{Conj: Grothendieck's section conjecture}
	Let $X/k$ be a smooth projective curve of genus $g\geq 2$ over a field $k$ that is finitely generated over $\Q$. Then the section map $$\text{sec}:\{k\text{-rational points of }X\}\to \{\text{splittings of }(\ref{etale SES})\}/\text{conjugation}$$ is a bijection.
\end{conjecture}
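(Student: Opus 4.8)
The plan is to treat injectivity and surjectivity of $\text{sec}$ separately, since they are of very different difficulty.

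For injectivity I would pass to the Jacobian. Fix a base point $b\in X(k)$ and let $\iota_b\colon X\hookrightarrow J$, with $J=\Jac(X)$, be the associated Abel--Jacobi closed immersion. A point $x\in X(k)$ has a decomposition group $D_x\subset\pi_1^{\text{\'{e}t}}(X)$ splitting the projection to $\Gal(\overline{k}/k)$, and $x\mapsto D_x$ is the section $\sigma_x$. Pushing forward along $\iota_b$ yields a section of the fundamental exact sequence of $J$, whose geometric fundamental group is the central, abelian Tate module $T(J)$. Kummer theory identifies $\sigma_x$ with the Kummer class of $[x-b]$ under $J(k)\to H^1(\Gal(\overline{k}/k),T(J))$, a map whose kernel is the maximal divisible subgroup of $J(k)$. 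By the Lang--N\'{e}ron (generalized Mordell--Weil) theorem $J(k)$ is finitely generated over any field finitely generated over $\Q$, so this divisible subgroup vanishes and the Kummer map is injective; as $\iota_b$ is a closed immersion, $x\mapsto[x-b]$ is injective on $k$-points. Hence $x\ne y$ gives non-conjugate sections, proving injectivity.

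For surjectivity, given a splitting $s\colon\Gal(\overline{k}/k)\to\pi_1^{\text{\'{e}t}}(X)$ I would use the method of neighborhoods. Each open subgroup $U\le\pi_1^{\text{\'{e}t}}(X)$ containing $s(\Gal(\overline{k}/k))$ corresponds to a connected finite \'{e}tale cover $X_U\to X$ carrying an induced splitting $s_U$; I would show these assemble into a filtered system of nonempty closed subsets $S_U\subset X(k)$ --- the neighborhoods cut out by $s$ --- so that any point of $\bigcap_U S_U$ is a $k$-point $x$ with $\sigma_x$ conjugate to $s$. The task then becomes producing a point of this intersection. The strategy is first to prove that $s$ is everywhere locally geometric, i.e.\ that for each place $v$ of $k$ the localized splitting $s_v$ arises from a point of $X(k_v)$, inputting the available local results (Koenigsmann's $p$-adic theorem and the birational/valuation-theoretic form of the local statement), and then to descend this adelic family of local points to a global point through finite descent along the covers $X_U$.

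The step I expect to be the main obstacle is precisely this global descent: ruling out non-geometric sections. Even granting local geometricity at every place, the existence of a genuine $k$-point realizing $s$ is governed by the section-theoretic finite-descent (Brauer--Manin type) obstruction, and the conjecture is essentially equivalent to the statement that this is the only obstruction and that it vanishes for every splitting of (\ref{etale SES}). Proving that vanishing unconditionally demands deep input --- Faltings-type finiteness of $X(k)$, height bounds, or a cuspidalization feeding $p$-adic Hodge theory back into the local-to-global passage --- rather than any formal manipulation of the exact sequence; this is the part I expect to resist a purely formal treatment, and it is exactly the difficulty that the complex-analytic (topological and Hodge-theoretic) analogues developed below are designed to model and isolate.
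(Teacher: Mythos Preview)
The statement you are trying to prove is labelled a \emph{Conjecture} in the paper, and for good reason: Grothendieck's section conjecture is open. The paper offers no proof of it and does not claim one; it only uses the conjecture as motivation for the topological and Hodge-theoretic analogues that follow. So there is no ``paper's own proof'' against which to compare your attempt.

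That said, your treatment of the two halves is accurate in its assessment of their relative status. The injectivity argument you sketch is essentially the standard one and is in fact a theorem: pushing a section along the Abel--Jacobi map into $J$, identifying conjugacy classes of sections of the abelianized sequence with $H^1(\Gal(\overline{k}/k),T(J))$, and then invoking Lang--N\'{e}ron to see that the Kummer map $J(k)\to H^1(\Gal(\overline{k}/k),T(J))$ is injective does prove that distinct $k$-points yield non-conjugate sections. (One small point worth making explicit: you need that $\pi_1^{\text{\'{e}t}}(X_{\overline{k}})$-conjugacy of sections of (\ref{etale SES}) implies $T(J)$-conjugacy after abelianization, which is immediate.) This half is known and your outline is correct.

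For surjectivity, you yourself identify the gap honestly: the passage from local geometricity plus finite descent to the existence of a global $k$-point is exactly the content of the conjecture, and no amount of neighborhood bookkeeping or Brauer--Manin formalism currently closes it. Your proposal here is not a proof but a description of where the difficulty lies, which matches the state of the art. The paper's Section~\ref{section:weak top. section conjecture} carries out, in the topological setting, the neighborhood/pro-cover reduction you describe (reducing surjectivity to a ``weak'' existence statement for all connected \'etale covers), following Stix's arithmetic argument; but even there the weak statement remains conjectural.

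In short: there is no proof in the paper to compare against, your injectivity half is correct and standard, and your surjectivity half is --- as you concede --- not a proof but an outline of an open problem.
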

\begin{remark}
	One can see that the notion of anabelian schemes is supposed to be an algebraic analogue of $k(\pi,1)$-spaces in topology. Indeed, hyperbolic curves and $\mc{M}_{g,n}$ are all $k(\pi,1)$-spaces, and if we have a Serre fibration $f:X\to Y$, where both $Y$ and the fiber $F$ are $k(\pi,1)$-spaces, then so is $X$. However, it's worth pointing out that the analogy is not perfect and there are varieties which are $k(\pi,1)$ spaces that are not anabelian (for example, elliptic curves over a field of characteristics $0$ are not anabelian but they are still $k(\pi,1)$ spaces).
\end{remark}

The question we would like to explore in this paper is that if we now work over the complex numbers, is there a reasonable class of schemes, and some functorial invariant $F$ like the \'{e}tale fundamental group, so that the functor which sends such a scheme $X$ to this functorial invariant $F(X)$ is fully faithful. A na\"{i}ve approach to this question is that we may simply take the class of anabelian schemes proposed by Grothendieck and see if one can find any interesting functorial invariant that can replace the \'{e}tale fundamental group when we work over the complex numbers.

In this paper, we study two invariants that naturally occur in complex algebraic geometry: the first one is the topological fundamental group and the other one is the category of graded-polarizable admissible $\Z$-variation of mixed Hodge structures. Furthermore, we study them in the context of families of curves over another curve, in particular Kodaira fibrations, and the family of Jacobians associated to a Kodaira fibration. We first recall what a Kodaira fibration is.
\begin{definition}[Kodaira fibration]
	A \textit{Kodaira fibration} $f:X\to B$ is a non-isotrivial fibration from a smooth projective surface $X$ onto a smooth projective curve $B$ such that all of the fibers are smooth projective of genus $g$.
\end{definition}

Examples of such a fibration are first constructed by Kodaira in \cite{Kod67}; see also the work of Parshin \cite{Par68} and Atiyah \cite{Ati69}. For a recent survey on Kodaira fibration, see \cite{Cat17}. By an theorem in a paper of Kas which he attributed to Kodaira and Bers \cite[Theorem 1.1] {Kas68}, we know that given a Kodaira fibration $f:X\to B$, the genus of the base $B$ is at least $2$ and the genus of the fiber $X_b$ is at least $3$. In particular, they should be examples of anabelian schemes, making them ideal testing grounds for anabelian conjectures. We can now formulate the questions we study in this paper:

\textbf{Topological version of the question}. Given a Kodaira fibration $f:X\to B$, we have a short exact sequence of topological fundamental groups
\begin{equation}
	\label{top. SES}
	1\to \pi_1(X_b)\to \pi_1(X)\to \pi_1(B)\to 1.
\end{equation}
Now any algebraic section induces a splitting of this short exact sequence up to conjugation by $\pi_1(X_b)$, so we get a section map $$\sec_{\text{top}}:\{\text{algebraic sections of }f:X\to B\}\to \{\text{sections of (\ref{top. SES})}\}/\text{conjugation},$$ where the conjugation action is the natural action of $\pi_1(S_b)$ on the set of sections of (\ref{top. SES}) defined by $$(g\cdot f)(x):=gf(x)g^{-1}\quad\quad\text{ for all }g\in \pi_1(X_b), x\in \pi_1(B), f\text{ splittings of (\ref{top. SES})}.$$ The following question is then a topological analogue of Grothendieck's section conjecture (Conj. \ref{Conj: Grothendieck's section conjecture}).
\begin{question}
\label{question:top. section question}
	Is the section map $\sec_{\text{top}}$ a bijection?
\end{question}

Now it's worth pausing and asking why one should think that Question \ref{question:top. section question} may have a positive answer. Indeed, if we just consider the na\"{i}ve topological analogue of Grothendieck's section conjecture for smooth projective curves $X$ over $\C$, we will not get an one-to-one correspondence: there are infinitely many maps from $\Spec \C$ to $X$, but only one group theoretic map from $\{*\}=\pi_1(\Spec \C)\to \pi_1(X)$. Therefore, the section map in this case is surjective but never injective. \footnote{One may repair this issue by working with $\pi_0(X(\C))$, and this approach is used in the formulation of the real section conjecture. See \cite[Section 16.1]{Sti13} for an overview.}

However, notice that in this case, $\pi_1(\Spec \C)$ is trivial, and therefore $X\to \Spec\C$ has trivial monodromy. On the other hand, in the case of a curve over a number field $k$, since $\pi_1^{\text{\'{e}t}}(k)=\Gal(\overline{k}/k)$, we do have non-trivial monodromy. Therefore, in some sense, Kodaira fibrations are closer analogues to curves over number fields than Riemann surfaces. Indeed, the analogue of Faltings' theorem, known as the geometric Mordell conjecture holds for Kodaira fibrations \cite{Man63}, i.e. any Kodaira fibration $f:X\to B$ has only finitely many algebraic sections. Furthermore, one can give a proof of geometric Mordell conjecture using a strategy similar to Faltings' original proof \cite{McM00}. In particular, it proves a geometric Shafarevich conjecture \cite[Theorem 3.1]{McM00}, which states that for a given base $B$, there are only finitely many Kodaira fibrations $f:X\to B$ with fiber genus $g(S_b)=g$.

Perhaps the most relevant evidence is that the topological analogue of Grothendieck's section conjecture holds for the universal curve $\mc{C}_g$ over the moduli stack $\mc{M}_g$ of genus $g$ curves when $g\geq 2$: for $\mc{C}_g\to \mc{M}_g$, the associated short exact sequence of topological fundamental group is the Birman short exact sequence $$1\to \pi_1(\Sigma_g)\to \text{MCG}_{g,1}\to \text{MCG}_g\to 1,$$ where $\Sigma_g$ is a smooth compact orientable Riemann surface of genus $g$, and $\text{MCG}_{g,n}$ is the mapping class group of a genus $g$ surface with $n$ marked points. This short exact sequence is known to be a non-split sequence whenever $g\geq 2$ \cite[Corollary 5.11]{FM12} (as we will use later, it does not even virtually split, see \cite{CS21}). Analogous results hold for the universal $n$-pointed curve as well \cite[Corollary 1.2]{Che19}. Therefore, it's interesting to ask if Question \ref{question:top. section question} has a positive answer at least when the monodromy representation $$\rho:\pi_1(B)\to \Sp_{2g}(\Z)$$ associated to a Kodaira fibration $f:X\to B$ has large image. One of the main results of this paper is that the non-injectivity phenomenon observed in the case of Riemann surfaces does not occur for Kodaira fibrations with large monodromy:
\begin{theorem}[See Cor. \ref{injectivity of top. section question} in Section \ref{section:family of Jacobians}]
If $f:X\to B$ is a Kodaira fibration whose monodromy representation $\rho$ has no invariants, then $\sec_{\text{top}}$ is injective.
\end{theorem}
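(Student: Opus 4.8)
The plan is to reduce the statement, by way of the relative Jacobian, to an injectivity statement for normal functions, which is then settled by Hodge theory.

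\textbf{Step 1: a topological dictionary.} Since $S$, $C$ and a fibre $S_b$ are all aspherical ($S$ is a surface bundle over a surface with aspherical fibre), and since $Z(\pi_1(C))=1$ because $g(C)\ge 2$ by Kas's theorem, free homotopy classes of continuous sections of $f$ are in natural bijection with conjugacy classes of splittings of (\ref{top. SES}): two sections are freely homotopic as maps $C\to S$ iff the induced homomorphisms $\pi_1(C)\to\pi_1(S)$ are conjugate in $\pi_1(S)$, and applying $f_*$ shows the conjugating element lies in $\ker f_*=\pi_1(S_b)$. Since $\Phi$ sends an algebraic section to the conjugacy class of its splitting, it follows that $\Phi$ is injective iff any two algebraic sections of $f$ that are homotopic as maps $C\to S$ coincide.

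\textbf{Step 2: passage to the relative Jacobian.} Let $J\defeq\Pic^0_{S/C}\to C$, an abelian scheme over $C$, and let $\mc{H}$ be the weight $-1$ polarizable variation $R^1f_*\Z(1)\cong R_1f_*\Z$ on $C$, whose underlying local system $\mc{H}_\Z$ has monodromy $\rho$ (up to the harmless symplectic self-duality). Given algebraic sections $\sigma_0,\sigma_1$ of $f$, let $\mathrm{aj}\colon S\to J$ be the relative Abel--Jacobi map over $C$ based at $\sigma_0$, so that on each fibre $\mathrm{aj}\colon S_b\hookrightarrow\Jac(S_b)$ is the usual closed embedding (genus $\ge 1$), $\mathrm{aj}\circ\sigma_0$ is the zero section $0_J$, and $\delta\defeq\mathrm{aj}\circ\sigma_1$ is an algebraic section of $J\to C$. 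Fibrewise injectivity of $\mathrm{aj}$ gives $\sigma_0=\sigma_1\iff\delta=0_J$, while composing a free homotopy $\sigma_0\simeq\sigma_1$ with $\mathrm{aj}$ shows $\delta$ is freely homotopic to $0_J$ as maps $C\to J$. Running the dictionary of Step 1 for the (again aspherical) torus bundle $J\to C$, whose fibre group $\Z^{2g}$ is abelian so that conjugation is trivial, the splitting of $1\to\mc{H}_\Z\to\pi_1(J)\to\pi_1(C)\to 1$ induced by $\delta$ agrees with the one induced by $0_J$; equivalently, the class of $\delta$ in the $H^1(C;\mc{H}_\Z)$-torsor of splittings based at $0_J$ vanishes. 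So it remains to prove: an algebraic section of $J\to C$ with vanishing class in $H^1(C;\mc{H}_\Z)$ is the zero section.

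\textbf{Step 3: Hodge theory.} Because $\mc{H}$ has weight $-1$ Griffiths transversality is automatic on the relevant extensions, and because $C$ is projective admissibility is automatic; hence an algebraic (equivalently holomorphic) section of $J\to C$ determines an extension $0\to\mc{H}\to\mc{E}\to\Z(0)\to 0$ of graded-polarizable admissible $\Z$-variations of mixed Hodge structure on $C$, and this identifies the group of such sections with $\Ext^1_{\VMHS(C)}(\Z(0),\mc{H})$, compatibly with sending a section to its topological class, i.e.\ with the forgetful map $\Ext^1_{\VMHS(C)}(\Z(0),\mc{H})\to\Ext^1_{\mathrm{Loc}(C)}(\Z,\mc{H}_\Z)=H^1(C;\mc{H}_\Z)$. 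Now the hypothesis that $\rho$ has no invariants says exactly that $(\mc{H}_\Q)^{\pi_1(C)}=0$, hence $H^0(C,\mc{H})=0$ and, by Poincaré duality and self-duality of $\mc{H}$, also $H^2(C,\mc{H})=0$; so $R\Gamma(C,\mc{H})$ is concentrated in degree $1$, and (using $\Ext^1_{\VMHS(C)}(\Z,\mc{H})=\Ext^1_{D^b(\MHS)}(\Z,R\Gamma(C,\mc{H}))$) we get $\Ext^1_{\VMHS(C)}(\Z(0),\mc{H})=\Hom_{\MHS}\bigl(\Z(0),H^1(C,\mc{H})\bigr)$, which sits inside $H^1(C;\mc{H}_\Z)$ as the subgroup of Hodge classes, with the forgetful map being the inclusion. (Equivalently, in the notation of Carlson's description of weight $-1$ extensions, the kernel of the forgetful map is the intermediate Jacobian $J(H^0(C,\mc{H}))$ of the fixed part, which vanishes once $H^0(C,\mc{H})=0$.) Therefore the forgetful map is injective, the extension $\delta$ is trivial, $\delta=0_J$, and so $\sigma_0=\sigma_1$. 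This proves that $\Phi$ is injective.

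\textbf{Expected main obstacle.} The topological bookkeeping of Steps 1--2 (the splitting/homotopy dictionary, and upgrading ``freely homotopic'' to ``equal splitting'' using $Z(\pi_1(C))=1$ and the commutativity of the fibre group of $J$) is routine. The real work is Step 3: checking that an algebraic section of $J\to C$ --- equivalently the relative $0$-cycle $\sigma_1-\sigma_0$ on $S/C$ --- genuinely produces an admissible variation of mixed Hodge structure (the potential subtleties being horizontality and quasi-unipotence, which are absent here precisely because $\mc{H}$ has weight $-1$ and $C$ is compact), and identifying the kernel of the forgetful map with the intermediate Jacobian of the fixed part (via Carlson's theory, or via the degeneration of the Hodge--de Rham spectral sequence with coefficients in $\mc{H}$). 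Once that is in place, the hypothesis on $\rho$ does exactly what is needed; it is worth noting that the argument in fact only uses that $J\to C$ has no ``constant part'', which is what vanishing of the monodromy invariants encodes.
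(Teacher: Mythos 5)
Your proposal is correct, and its skeleton coincides with the paper's: both arguments fix a section $\sigma_0$, embed $S$ fibrewise into $\Pic^0_{S/C}$ by the Abel--Jacobi map based at $\sigma_0$, and reduce injectivity of $\Phi$ to the statement that an algebraic section of the relative Jacobian whose topological class in $H^1(C;R^1f_*\Z)$ vanishes must be the zero section (this is the content of Cor.~\ref{Cor: injectivity of phi^ab}). Where you diverge is in how that last statement is proved. The paper (Lemma~\ref{tildephi}) identifies the topological-class map with the connecting homomorphism of the uniformization sequence $0\to R^1\pi_*\Z\to R^1\pi_*\mc{O}\to\Pic^0_{S/C}\to 0$, so that its kernel is the image of $H^0(C,R^1\pi_*\mc{O})$, and then kills $H^0(C,R^1\pi_*\mc{O})$ using Simpson's polystability of the Higgs bundle attached to $R^1f_*\Z$ and the non-abelian Hodge correspondence. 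You instead invoke Carlson--Hain extension theory, describing holomorphic sections of $J\to C$ as $\Ext^1_{\VMHS(C)}(\Z(0),\mc{H})$ and identifying the kernel of the forgetful map with $J\bigl(H^0(C,\mc{H})\bigr)$. These routes buy the same thing, but note that your identification of the kernel is not formal: unwinding it, a section with trivial topological class lifts to a global holomorphic section of the bundle $\mc{H}_{\mc{O}}/F^0\cong R^1\pi_*\mc{O}$, and one still needs a genuine Hodge-theoretic input ($E_1$-degeneration of Hodge--de Rham with coefficients in $\mc{H}$, i.e.\ Zucker's theorem, or the theorem of the fixed part) to conclude that this space vanishes when the local system has no invariants --- you correctly flag this as the real work, and it is the exact counterpart of the paper's appeal to Simpson. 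Two small corrections: in Step 2, ``the fibre group is abelian so conjugation is trivial'' is false when the monodromy on $\mc{H}_\Z$ is nontrivial --- conjugation by $a$ changes a splitting by the coboundary $x\mapsto a-x\cdot a$; what you actually need (and what is true) is only that conjugate splittings are cohomologous in $H^1(C;\mc{H}_\Z)$. Also, the vanishing of $H^2(C,\mc{H})$ plays no role in the injectivity you are after and can be dropped.
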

The main strategy to prove this theorem is to study the associated family of Jacobians $\pi:\Pic^0_{X/B}\to B$ attached to a Kodaira fibration $f:X\to B$. One could similarly define an abelianized section map in this context and ask Question \ref{question:top. section question} in the case of family of Jacobians. Then the theorem above follows from the following one:
\begin{theorem}
	Let $\pi:\Pic^0_{X/B}\to B$ be the family of Jacobians associated to a Kodaira fibration. Then if the monodromy representation associated to $f:X\to B$ has no invariants, the abelianized section map is injective (see Cor. \ref{Cor: injectivity of phi^ab}).
\end{theorem}

In fact, this theorem is true for any abelian schemes over a curve, and we work in that generality at the beginning of section \ref{section:family of Jacobians}. 

On the other hand, it turns out that the surjectivity part of the topological section question is not true in general. 
\begin{proposition}\label{Prop: surjectivity of top. section conjecture}\hfill
\begin{enumerate}
	\item Let $f:X\to B$ be a Kodaira fibration with an algebraic section and $\pi:\Pic^0_{X/B}\to B$ is the associated family of Jacobians. Then the abelianized section map is never surjective (see Corollary \ref{surjectivity});
	\item Let $f:X\to B$ be a Kodaira fibration with an algebraic section. Then there exists a double cover $B'\to B$ such that the base change $f':X':=X\times_B B'\to B$ is a Kodaira fibration for which the topological section map is not surjective (see Appendix \ref{section: appendix A}).
\end{enumerate}
\end{proposition}
The second part of the proposition is due to Seraphina Lee and Carlos Serv\'{a}n who adapted their construction in \cite{LS24} of holomorphic Lefschetz fibrations with infinitely many sections to this setting. We are very grateful to them for writing up a proof of the second part of this proposition as an appendix to this paper.

We also show that Lee and Serv\'{a}n's construction in fact implies the existence of counterexamples to the surjectivity of the topological section map which have no algebraic sections. To explain this, we first introduce the weak topological section conjecture:
\begin{conjecture}[Weak topological section conjecture]
\label{conjecture: weak top. section conjecture}
	Let $f:X\to B$ be a Kodaira fibration. Then it admits an algebraic section if and only if the short exact sequence (\ref{top. SES}) of fundamental groups splits.
\end{conjecture}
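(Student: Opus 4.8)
The forward implication is immediate and requires no hypotheses beyond the existence of the section: an algebraic section $\sigma\colon C\to S$ satisfies $f\circ\sigma=\id_C$, so the induced map $\sigma_*\colon\pi_1(C)\to\pi_1(S)$ is a group-theoretic splitting of (\ref{top. SES}), well-defined up to the conjugation action of $\pi_1(S_b)$ coming from the choice of basepoint. The entire difficulty is concentrated in the converse, which is the genuine analogue of the surjectivity part of Grothendieck's section conjecture. I will outline the strategy I would pursue and isolate where it breaks down.

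The first step is to trade the group-theoretic splitting for a topological one. Since a Kodaira fibration has base of genus $\geq 2$ and fibers of genus $\geq 3$ by Kas's theorem, the base $C$, the fiber $S_b$, and hence (by the long exact homotopy sequence of the smooth fiber bundle $f$) the total space $S$ are all aspherical, i.e.\ $K(\pi,1)$-spaces. For a fibration of aspherical spaces, obstruction theory identifies conjugacy classes of splittings of the homotopy exact sequence with fiberwise homotopy classes of honest continuous sections of $f$, because the obstructions to building a section live in $H^{\bullet}(C;\pi_{\bullet}(S_b))$ and $\pi_n(S_b)=0$ for $n\geq 2$. Thus a splitting of (\ref{top. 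SES}) produces a continuous section $\sigma_{\mathrm{top}}\colon C\to S$. Equivalently, recording the fibration by its classifying map $C\to\mc{M}_g$ to the moduli stack and recognizing (\ref{top. SES}) as the pullback of the Birman sequence $1\to\pi_1(\Sigma_g)\to\text{MCG}_{g,1}\to\text{MCG}_g\to 1$ along the monodromy $\rho\colon\pi_1(C)\to\text{MCG}_g$, the problem becomes the clean \emph{Oka-type} statement: the moduli map $C\to\mc{M}_g$ admits a holomorphic lift to the universal curve $\mc{C}_g$ (an algebraic section) if and only if it admits a continuous lift (a topological splitting).

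The remaining and decisive step is to deform $\sigma_{\mathrm{top}}$ into a holomorphic section; by GAGA and Chow's theorem, applied to the graph in $C\times S$, any holomorphic section is automatically algebraic, so this would complete the proof. The natural tool is the theory of harmonic maps: equip $C$ with its hyperbolic metric, seek a harmonic representative of $\sigma_{\mathrm{top}}$, and attempt to prove it is holomorphic via Siu--Sampson type K\"ahler rigidity, exploiting the negative curvature of the fibers. Passing to universal covers, this amounts to producing a $\rho$-equivariant holomorphic section, over the equivariant period map $\mb{H}\to\mc{T}_g$ into Teichm\"uller space, of the universal curve over $\mc{T}_g$.

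I expect this last step to be the main obstacle, and it is precisely why the statement is posed as a conjecture rather than a theorem. There is no general principle forcing a continuous section of a holomorphic fibration to be homotopic to a holomorphic one, and the harmonic representative of $\sigma_{\mathrm{top}}$ need not be holomorphic without strong curvature or rigidity hypotheses that a general Kodaira fibration may fail to satisfy. Crucially, one cannot sidestep the difficulty by abelianizing: the results of Section \ref{section:family of Jacobians} show that the analogous surjectivity statement already fails for the associated family of Jacobians (Cor.\ \ref{surjectivity}), so the nonabelian structure of $\pi_1(S_b)$ is indispensable. Any successful argument must therefore engage the full fundamental group in a way that mirrors---and is presumably no easier than---the arithmetic section conjecture itself.
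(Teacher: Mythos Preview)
Your assessment is correct, and it aligns exactly with the paper's own treatment: this statement is a \emph{conjecture}, not a theorem, and the paper does not attempt to prove it. There is no ``paper's proof'' to compare against. The forward direction is, as you note, immediate; the converse is open, and the paper treats it as such.

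What the paper does prove (Proposition~\ref{Prop: weak=surjectivity} and Corollary~\ref{Cor: weak=surjectivity for Kodaira fibrations}) is that this conjecture, assumed for all connected finite \'etale covers $S'\to S$ with connected fibers over $C$, is \emph{equivalent} to the surjectivity of the section map $\Phi$ for $f:S\to C$. This is a reduction in the spirit of Stix's analogous result in the arithmetic setting, not a proof of the conjecture itself.

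Your outline of the harmonic-map/Siu--Sampson strategy is a reasonable sketch of how one might attack the converse, and your diagnosis of where it stalls is sound: there is no general mechanism forcing the harmonic representative of a continuous section to be holomorphic. Your observation that abelianization cannot rescue the argument (because of Corollary~\ref{surjectivity}) is exactly the point the paper makes. So your proposal is not a proof, but it is an honest and accurate account of why no proof currently exists.
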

Next we relate the surjectivity of topological section maps for Kodaira fibrations to this weak topological section conjecture. More precisely, we prove the following:
\begin{proposition}[see Cor. \ref{Cor: weak=surjectivity for Kodaira fibrations}]
	The surjectivity of the section map $\sec_{\text{top}}$ is equivalent to the weak topological section conjecture for all connected finite \'{e}tale covers $X'$ of $X$ such that $X'\to B$ has connected fibers (i.e. $X'\to B$ is also a Kodaira fibration).
\end{proposition}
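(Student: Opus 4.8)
The plan is to prove the two implications separately; throughout I write $\Pi=\pi_1(S_b)$, $\Gamma=\pi_1(S)$, $Q=\pi_1(C)$, and $f_*\colon\Gamma\to Q$ for the surjection in (\ref{top. SES}).

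For the implication ``surjectivity of $\Phi$ $\Rightarrow$ weak section conjecture for all such $S'$'', I would fix a connected finite \'etale cover $q\colon S'\to S$ with $S'\to C$ of connected fibers (hence a Kodaira fibration), observe that one direction of its weak section conjecture is automatic, and for the other let $s'\colon Q\to\pi_1(S')$ split the fundamental-group sequence of $S'\to C$. Since $f_*$ restricts on $\pi_1(S')\le\Gamma$ to the analogous surjection for $S'\to C$, the composite $Q\xrightarrow{s'}\pi_1(S')\hookrightarrow\Gamma$ splits (\ref{top. SES}); by surjectivity of $\Phi$ it is the class of an algebraic section $\sigma\colon C\to S$. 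As the class of $\sigma$ factors through $\pi_1(S')$, after choosing a basepoint over a point of $\sigma(C)$ the section $\sigma$ lifts along $q$ to a continuous section of $S'\to C$, and since $q$ is a finite \'etale morphism of varieties this lift is algebraic. This is the weak section conjecture for $S'$.

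For the converse I would assume the weak section conjecture for all such covers and take a splitting $s\colon Q\to\Gamma$ of (\ref{top. SES}); set $A=s(Q)$. For each finite-index $\Gamma'\le\Gamma$ with $A\le\Gamma'$, the cover $S_{\Gamma'}\to S$ has $\pi_1(S_{\Gamma'})=\Gamma'\twoheadrightarrow Q$ (since $A$ already surjects onto $Q$), so $S_{\Gamma'}\to C$ has connected fibers, is a Kodaira fibration by hypothesis, and $s$ factors through $\Gamma'$; the weak section conjecture then produces an algebraic section of $S_{\Gamma'}\to C$ and hence, composing with $S_{\Gamma'}\to S$, an algebraic section of $S\to C$ whose homotopy class lifts to $S_{\Gamma'}$. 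Here I would invoke the finiteness of algebraic sections of a Kodaira fibration (the analogue of Faltings' theorem, \cite{McM00}): the sets $T_{\Gamma'}:=\{\,\text{algebraic sections of }S\to C\text{ that lift to }S_{\Gamma'}\,\}$ are nonempty subsets of one fixed finite set with $T_{\Gamma''}\subseteq T_{\Gamma'}$ whenever $\Gamma''\le\Gamma'$, so $\bigcap_{\Gamma'\supseteq A}T_{\Gamma'}\neq\varnothing$; fix $\sigma$ in it. Applying the same finiteness to each $S_{\Gamma'}\to C$, the lifts of $\sigma$ through the various $S_{\Gamma'}$ form an inverse system of nonempty finite sets and so admit a compatible choice; equivalently, $\sigma$ lifts to a section $\widetilde\sigma$ of $\widetilde S\to C$, where $\widetilde S\to S$ is the connected cover with $\pi_1(\widetilde S)=\bigcap_{\Gamma'\supseteq A}\Gamma'$. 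Now $A$ is a retract of $\Gamma$ (via $s\circ f_*$) and $\Gamma$ is residually finite, so $A$ is a separable subgroup and $\bigcap_{\Gamma'\supseteq A}\Gamma'=A$; indeed, for $x$ in this intersection the element $x\,s(f_*x)^{-1}$ lies in $\Pi$ and in every finite-index subgroup containing $A$, hence in $A$ by separability, hence in $\Pi\cap A=1$, so $x=s(f_*x)\in A$. Therefore $\pi_1(\widetilde S)=A$, the map $f_*$ restricts to an isomorphism $A\xrightarrow{\sim}Q$, and a section $\widetilde\sigma$ of $\widetilde S\to C$ is forced to induce its inverse; thus $\sigma_*=s$, i.e.\ $\Phi(\sigma)=[s]$, which is the surjectivity we want.

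The step I expect to be the main obstacle is the end of the converse: one cannot apply the weak section conjecture to $A=s(Q)$ itself, since the cover of $S$ it determines has infinite degree, so one is forced to approximate $A$ by the finite-index subgroups lying above it and then extract a single algebraic section surviving the whole tower — and it is exactly at this point that the Faltings-type finiteness of algebraic sections is needed, not just for $S\to C$ but for every intermediate Kodaira fibration $S_{\Gamma'}\to C$. This is the structural reason the statement must quantify over all the connected finite \'etale covers $S'$ of $S$ rather than over $S$ alone.
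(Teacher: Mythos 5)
Your argument is correct and is essentially the paper's proof of Proposition \ref{Prop: weak=surjectivity} specialized to Kodaira fibrations: the easy direction by lifting an algebraic section through any cover containing (a conjugate of) the image of the splitting, and the hard direction by running the weak section conjecture over the tower of finite-index subgroups containing $A=s(\pi_1(C))$, using McMullen's finiteness to extract a compatible system and separability of the retract $A$ to identify the limit cover. The only cosmetic difference is that the paper establishes $\bigcap_{\Gamma'\supseteq A}\Gamma'=A$ directly from residual finiteness of $\pi_1(S_b)$ via Malcev's argument (Lemma \ref{lemma: sections with the same neighbourhood}) rather than quoting separability of retracts of the residually finite group $\pi_1(S)$ --- your ``indeed, \dots\ by separability'' aside is circular as written, but the fact it invokes is exactly what that lemma proves.
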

\begin{remark}
	This result also has an analogue in the arithmetic setting \cite[Theorem 31]{Sti10}, which states that the surjectivity part of Grothendieck's section conjecture may be deduced from the weak section conjecture (i.e. existence of a rational points is the same as the existence of a splitting of the sequence \ref{etale SES}) for geometrically connected finite \'{e}tale covers of the curve. In fact, the proof is almost the same as the proof of this analogous result in \cite{Sti10}.
\end{remark}

Then combining these two results, we see that
\begin{corollary}
	There exists a Kodaira fibration with no algebraic section but whose associated short exact sequence of topological fundamental group splits.
\end{corollary}

\textbf{Hodge theoretic version of the question}. Another possible candidate to replace the \'{e}tale fundamental group is the category of graded-polarizable admissible $\Z$-variation of mixed Hodge structures $\VMHS_\Z$, as Hodge theory has always been an extremely useful tool in the study of complex algebraic geometry. The downside of this category is that it's not a Tannakian category, and therefore, we cannot use it to define a Tannakian fundamental group. This means that we will lose the group theoretic aspect of anabelian geometry if we work with $\VMHS_\Z$. Nevertheless, it still makes sense to ask if functors between these categories are in one-to-one correspondence with algebraic sections. In other words, given a map $f:X\to B$, we get a section map $$\text{sec}:\{\text{algebraic sections to }f\}\to \{\text{section functors from }\VMHS_\Z(X)\to \VMHS_\Z(B)\},$$ where section functors are defined to be functors that becomes isomorphic to the identity functor after composing with the pullback $f^*$. Then the Hodge theoretic section question asks if this Hodge theoretic section map is a bijection. See section (\ref{section: Hodge theoretic version}) for more detailed backgrounds and a more precise formulation of the question. 

For this Hodge theoretic analogue, we prove the following injectivity result:
\begin{theorem}[see Prop. \ref{Proposition: Hodge theoretic injectivity}]
	For any family of curves $f:X\to B$. The injectivity part of the Hodge theoretic section question holds, i.e. distinct algebraic sections produce non-isomorphic section functors.
\end{theorem}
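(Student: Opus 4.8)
The plan is to argue by contradiction, producing from any pair of distinct algebraic sections $s_1 \neq s_2$ of $f$ a single object of $\VMHS_\Z(\mc X)$ on which the two induced pullback functors disagree. Recall that an algebraic section $s \colon \mc B \to \mc X$ of $f$ gives a section functor $s^{*} \colon \VMHS_\Z(\mc X) \to \VMHS_\Z(\mc B)$ precisely because $f \circ s = \id$ forces $s^{*}f^{*} \cong \id$; so suppose $\eta \colon s_1^{*} \Rightarrow s_2^{*}$ is a natural isomorphism. The object we test against is a relative Abel--Jacobi extension. Form the fibre product $\mc X \times_{\mc B} \mc X$ with projections $p_1, p_2$; here $p_2$ is the base change of $f$ along $f$, and both the diagonal $\Delta$ and $\Gamma \defeq p_1^{-1}(s_1(\mc B))$ are sections of $p_2$, so that $\mc D \defeq \Delta - \Gamma$ is a family over $\mc X$ of degree-zero divisors on the fibres of $p_2$. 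By the theory of admissible normal functions attached to algebraic cycles, $\mc D$ defines a class in $\Ext^1_{\VMHS_\Z(\mc X)}\!\bigl(\Z,(R^1p_{2*}\Z)(1)\bigr)$; using the base-change isomorphism $R^1p_{2*}\Z \cong f^{*}R^1f_{*}\Z$ this is an extension
\[
0 \longrightarrow (f^{*}R^1f_{*}\Z)(1) \longrightarrow E \longrightarrow \Z \longrightarrow 0
\]
in $\VMHS_\Z(\mc X)$.

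I then compute the pullback of $E$ along an arbitrary algebraic section $s' \colon \mc B \to \mc X$. The base change of $(\mc X\times_{\mc B}\mc X, p_2)$ along $s'$ is canonically $(\mc X, f)$, and under this identification $\Delta$ pulls back to the divisor $s'(\mc B) \subset \mc X$ and $\Gamma$ pulls back to $s_1(\mc B)$. Hence $(s')^{*}E$ is the extension of $\Z$ by $(R^1f_{*}\Z)(1)$ in $\VMHS_\Z(\mc B)$ whose class is the normal function
\[
\nu_{s'} \colon b \longmapsto \mathrm{AJ}_{\mc X_b}\!\bigl([s'(b)] - [s_1(b)]\bigr) \in \Jac(\mc X_b).
\]
In particular $\nu_{s_1}$ vanishes identically, so $s_1^{*}E$ is the split extension $\Z \oplus (R^1f_{*}\Z)(1)$.

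To conclude, apply $\eta$ to the monomorphism $(f^{*}R^1f_{*}\Z)(1) \hookrightarrow E$ and to the epimorphism $E \twoheadrightarrow \Z$. Naturality of $\eta$ makes these into a morphism of short exact sequences from $s_1^{*}E$ to $s_2^{*}E$ whose two outer components are automorphisms of $(R^1f_{*}\Z)(1)$ and of $\Z$ (and whose middle component is therefore an isomorphism). By functoriality of $\Ext^1$ in both variables the class of $s_2^{*}E$ is the image of the class of $s_1^{*}E$ under these automorphisms, hence is again zero; that is, $\nu_{s_2}$ vanishes identically. By Abel's theorem $[s_2(b)] \sim [s_1(b)]$ on $\mc X_b$ for every $b$, and since the fibres of $f$ have positive genus, linearly equivalent points of a fibre coincide. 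Therefore $s_1(b) = s_2(b)$ for all $b$, i.e.\ $s_1 = s_2$, a contradiction; this is the only place where the genus hypothesis enters.

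The step I expect to need the most care is the very construction of $E$ as an honest object of $\VMHS_\Z(\mc X)$: this is the admissibility of the relative Abel--Jacobi normal function over the base $\mc X$ (which may be of higher dimension and may be open), and in particular its good behaviour along $s_1(\mc B)$, where the naive geometric model --- the relative first cohomology of the twice-punctured family $(\mc X\times_{\mc B}\mc X) \setminus (\Delta \cup \Gamma)$ over $\mc X$ --- drops in rank and hence fails to be a local system. It is here that one must invoke the general theory of admissible normal functions rather than a hands-on construction; the remaining steps (the base-change computations, the use of naturality, and the appeal to Abel's theorem) are routine.
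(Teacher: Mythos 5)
Your proposal is correct and is essentially the paper's own argument: the extension $E$ you build as the normal function of $\Delta-\Gamma$ on $\mc{X}\times_{\mc{B}}\mc{X}\to\mc{X}$ is exactly the paper's test object $\mc{J}$, obtained there by pulling back the universal extension on $\Pic^0_{\mc{X}/\mc{B}}$ along the relative Abel--Jacobi map (equivalently, the first Hain--Zucker canonical variation in families), and both proofs isolate admissibility as the one delicate point, which the paper likewise handles by realizing this object geometrically via a Beilinson--Deligne--Goncharov-style cosimplicial construction rather than a hands-on local argument. The only real divergence is the endgame: where the paper appeals directly to injectivity of the fiberwise period map, you use naturality of $\eta$ to transport the splitting of $s_1^*E$ to a splitting of $s_2^*E$ and then invoke Abel's theorem --- a slightly more careful variant, since it sidesteps the question of whether an abstract isomorphism of the fiber mixed Hodge structures (as opposed to an isomorphism of extensions with controlled outer automorphisms) pins down the point of the Jacobian.
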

It's worth pointing out that this result does not make any assumption on the monodromy of the family. Some special cases of this theorem can be readily deduced from known results. When $B$ is a point, this theorem follows from the Hain and Pulte's pointed Torelli theorem (see \cite[Theorem 5.5]{Pul88} and \cite[Theorem 7.5]{Hai87}). Hain, through private communication, also informed us that his proof of the main result in \cite{Hai11} can be adapted to prove that this Hodge theoretic section map is a bijection in the case where $B=\mc{M}_{g}$ for any $g>4$.

\textbf{Acknowledgement:} I would like to first thank my advisor Daniel Litt. This paper would not have been possible without his encouragement, generosity and wisdom. I'm grateful to Seraphina Lee and Carlos Serv\'{a}n for adapting their work into an appendix to this paper and for many helpful conversations. I would also like to thank Peter Jossen, Richard Hain, and Nick Salter for reading an earlier version of this paper and for giving me many valuable feedbacks and suggestions. Richard Hain and Sasha Shmakov point me to many useful references, for which I'm very grateful. Finally, I would like to thank Laure Flapan, whose delightful talk on Kodaira fibration got me interested in them in the first place.

\section{Some construction of Kodaira fibrations}\label{section:construction}
In this paper, we will prove theorems about Kodaira fibrations satisfying some extra hypotheses. In this section we would like to demonstrate that those theorems are non-empty, i.e., we give some standard constructions of Kodaira fibrations, which produce Kodaira fibrations satisifying those extra hypotheses. We also take this opportunity to set up some notations.

Let $f\colon X\to B$ be a Kodaira fibration and let $\mc{M}_g$ be the moduli stack of smooth projective curves of genus $g$. By the universal property of $\mc{M}_g$, such a Kodaira fibration $f\colon X\to B$ corresponds to some non-constant map $\varphi\colon B\to \mc{M}_g$. Therefore, to construct a Kodaira fibration, it is enough to construct complete curves inside $\mc{M}_g$. To avoid issues with stacks, we will work instead with the fine moduli space $\mc{M}_g[n]$ of genus $g$ curves with fixed level $n\geq 3$ structure. The following  construction, which we called the moduli construction, is fairly well-known (see for example \cite[Prop. 2.1]{Fla22}).

\textbf{Moduli construction}: Suppose $g\geq 4$ and consider the Satake compactification $\mc{M}_g[n]^*$, i.e., the closure of $\mc{M}_g[n]$ inside the Satake compactification $\mc{A}_g[n]^*$ of $\mc{A}_g[n]$ via the Torelli map $J\colon \mc{M}_g[n]\to \mc{A}_g[n]$. Since $\mc{M}_g[n]^*$ is projective, we may embed it into some large projective space and cut it with hyperplane sections and produce a curve. Now that when $g\geq 4$, the boundary component $\mc{M}_g[n]^*-J(\mc{M}_g[n])$ is of codimension at least $2$ and the hyperelliptic locus $\mc{H}_g[n]$, where the Torelli map fails to be an immersion, is of codimension at least $2$. Hence, a curve obtained by cutting $\mc{M}_g[n]^*$ with hyperplane sections can avoid the boundary component as well as the hyperelliptic locus, and corresponds to a Kodaira fibration via the universal property of $\mc{M}_g[n]$.

\begin{lemma}\label{lemma: surjective monodromy}
	Let $f\colon X\to B$ be a Kodaira fibration constructed via the moduli construction explained above. Then the image of the monodromy representation $$\rho\colon \pi_1(B)\to \Sp_{2g}(\mb{Z})$$ is of finite index inside $\Sp_{2g}(\Z)$ and hence the monodromy action on $H^1(X_b,\Z)$ has no invariants (i.e. $H^0(C,R^1\pi_*\Z)=0$).
\end{lemma}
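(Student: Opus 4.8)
The plan is to use a Lefschetz-type argument for the fundamental group of a curve obtained by repeated hyperplane sections of a quasi-projective variety. The key input is the following general principle: if $U$ is a smooth quasi-projective variety of dimension $\geq 3$ and $C \subset U$ is a smooth curve cut out by a sufficiently generic complete intersection of ample hyperplane sections, then $\pi_1(C) \to \pi_1(U)$ is surjective. This is a standard consequence of the Lefschetz hyperplane theorem in its homotopy form (see Goresky--MacPherson, or Hamm--L\^e), applied first to pass from $U$ to a smooth surface section $V$, and then once more from $V$ to $C$; at each stage surjectivity of $\pi_1$ holds. (Strictly speaking one works with the open variety and its boundary, but the statement for quasi-projective varieties is well documented.)

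\textbf{Applying this to the moduli construction.} First I would recall that, for $g \geq 4$, the curve $C$ arising in the moduli construction sits inside $\mc{M}_g[n]$, which is a smooth quasi-projective variety of dimension $3g-3 \geq 9$, and is obtained by intersecting the Satake compactification $\mc{M}_g[n]^*$ with generic hyperplanes, chosen so as to avoid the boundary and the hyperelliptic locus. Since $C$ is a generic curve section of $\mc{M}_g[n]$ in this sense, the Lefschetz principle above gives that $\pi_1(C) \to \pi_1(\mc{M}_g[n])$ is surjective. Composing with the monodromy representation of the universal curve over $\mc{M}_g[n]$, one gets that the image of $\rho : \pi_1(C) \to \Sp_{2g}(\Z)$ equals the image of $\pi_1(\mc{M}_g[n]) \to \Sp_{2g}(\Z)$. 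The latter is exactly the level-$n$ congruence subgroup $\Sp_{2g}(\Z)[n]$ (the image of the full mapping class group $\mathrm{MCG}_g$ on $H^1$ is all of $\Sp_{2g}(\Z)$, and restricting to the level-$n$ subgroup cuts out $\Sp_{2g}(\Z)[n]$), which is of finite index in $\Sp_{2g}(\Z)$.

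\textbf{Concluding no invariants.} Once the image $\Gamma \leq \Sp_{2g}(\Z)$ is known to be of finite index, it remains to check that $\Gamma$ has no nonzero invariants on $H^1(S_b,\Z) \cong \Z^{2g}$ with the standard symplectic action. Over $\Q$, a finite-index subgroup of $\Sp_{2g}(\Z)$ is Zariski-dense in $\Sp_{2g}$ (by Borel density, or simply because $\Sp_{2g}(\Z)$ is a lattice in the semisimple group $\Sp_{2g}(\R)$), and the standard representation of $\Sp_{2g}$ is irreducible and nontrivial for $g \geq 1$, so it has no nonzero invariant vectors; hence $(\Q^{2g})^{\Gamma} = 0$, and a fortiori $(\Z^{2g})^{\Gamma} = 0$, i.e. $H^0(C, R^1\pi_*\Z) = 0$.

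\textbf{Main obstacle.} The only genuinely delicate point is the first step: justifying that the particular curves produced by the moduli construction are ``generic enough'' for the Lefschetz surjectivity statement to apply, given that one must simultaneously arrange to miss the boundary and the hyperelliptic locus. This is handled precisely by the codimension $\geq 2$ statements quoted in the moduli construction for $g \geq 4$: the locus of bad hyperplane configurations (those failing Lefschetz surjectivity, or meeting the forbidden loci in too large a dimension) is a proper closed subset of the relevant space of hyperplane tuples, so a generic choice works. Everything after that is formal.
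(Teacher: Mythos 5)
Your proposal is correct and follows essentially the same route as the paper: the Lefschetz hyperplane theorem for quasi-projective varieties gives the surjection $\pi_1(C)\twoheadrightarrow \pi_1(\mc{M}_g[n])$, whose monodromy image is the level-$n$ congruence subgroup of $\Sp_{2g}(\Z)$, hence of finite index and without invariants. Your Borel-density justification of the last step is more detailed than the paper's (which simply asserts it), but the argument is the same.
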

\begin{proof}
	By Lefschetz's hyerplane theorem for quasi-projective varieties \cite[page 153]{GM88}, the monodromy representation $\rho$ factors through $\pi_1(\mc{M}_g[n])$ via a surjection: $$\pi_1(B)\twoheadrightarrow \pi_1(\mc{M}_g[n])\to \Sp_{2g}(\mb{Z}),$$ and by definition, the last map, which corresponds to the monodromy representation for the universal family over $\mc{M}_g[n]$, surjects onto the kernel of $\Sp_{2g}(\Z)\to \Sp_{2g}(\Z/n\Z)$, which certainly acts on $H^1(X_b,\Z)$ with no invariants.
\end{proof}

\begin{remark}\label{Remark: monodromy of Kodaira fibration}
	For any Kodaira fibration $f:X\to B$, a standard argument using Torelli's theorem (see \cite[page 2]{Fla22}) shows that the monodromy representation $\rho:\pi_1(B)\to \Sp_{2g}(\Z)$ always has infinite image.
\end{remark}
Since we are interested in studying algebraic sections of Kodaira fibrations, it would be quite pointless if there are no Kodaira fibrations with algebraic sections. The following theorem of Bregman proves that that's not the case, and that monodromy cannot obstruct the existence of an algebraic section.
\begin{proposition}\cite[Prop. 4.2]{Bre21}\label{Bregman's proposition}
	For every Kodaira fibration $f\colon X\to B$, there exists a Kodaira fibration $\tilde{f}\colon \tilde{X}\to \tilde{B}$ with an algebraic section such that 
	\begin{enumerate}
		\item The fibers of $f\colon X\to B$ and $\tilde{f}\colon \tilde{X}\to \tilde{B}$ are of same genus $g$ 
		\item The monodromy homomorphisms $\pi_1(B)\to \Mod_g$ and $\pi_1(\tilde{B})\to \Mod_g$ have the same image.
	\end{enumerate}
\end{proposition}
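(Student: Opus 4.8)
The plan is to obtain $\tilde f:\tilde S\to\tilde C$ by pulling the given Kodaira fibration back along a generic curve section of its own total space, reading off the section from the diagonal of the fiber square. First I would form the fiber product $p_1:S\times_C S\to S$ of $f$ with itself; this is a smooth proper family whose fiber over $s\in S$ is the fiber $S_{f(s)}$ of $f$, and it carries the diagonal section $\Delta:S\to S\times_C S$, $s\mapsto(s,s)$. Fixing a projective embedding $S\hookrightarrow\mathbb{P}^N$, I would take $\tilde C:=H\cap S$ for a general hyperplane $H$; by Bertini $\tilde C$ is smooth, and by Bertini irreducibility (valid since $\dim S=2$) it is irreducible. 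Crucially, the class of $\tilde C$ on $S$ is the hyperplane class, which has strictly positive self-intersection, whereas every fiber $S_b$ of $f$ has self-intersection $0$; hence $\tilde C\ne S_b$, and since $\tilde C$ and $S_b$ are both irreducible of dimension $1$, $\tilde C$ is not contained in any fiber. Therefore $f|_{\tilde C}:\tilde C\to C$ is non-constant, hence finite and surjective. Finally I would set $\tilde S:=p_1^{-1}(\tilde C)=\tilde C\times_C S$, let $\tilde f:=p_1|_{\tilde S}:\tilde S\to\tilde C$, and take $\Delta|_{\tilde C}:\tilde C\to\tilde S$, $c\mapsto(c,c)$, as the desired algebraic section.

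Next I would verify that $\tilde f$ is a Kodaira fibration of fiber genus $g$. Since $\tilde f$ is the base change of the smooth morphism $f$ along $f|_{\tilde C}$ and $\tilde C$ is a smooth curve, $\tilde S$ is a smooth projective surface; its fibers are the fibers of $f$, hence smooth connected curves of genus $g$, which also shows $\tilde S$ is connected. For non-isotriviality, the classifying map of $\tilde f$ is $\varphi\circ f|_{\tilde C}:\tilde C\to\mc{M}_g$, where $\varphi:C\to\mc{M}_g$ is the classifying map of $f$ and is non-constant because $f$ is non-isotrivial; since $f|_{\tilde C}$ is surjective the composite is non-constant as well, so $\tilde f$ is non-isotrivial. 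Thus $\tilde f$ is a Kodaira fibration with an algebraic section and fiber genus $g$, which gives (1).

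For (2), the monodromy of $\tilde f$ is the composite $\pi_1(\tilde C)\xrightarrow{(f|_{\tilde C})_*}\pi_1(C)\xrightarrow{\rho}\Mod_g$, so it suffices to show that $(f|_{\tilde C})_*$ is surjective. This map factors as $\pi_1(\tilde C)\to\pi_1(S)\to\pi_1(C)$: the first arrow is surjective by the Lefschetz hyperplane theorem applied to the ample divisor $\tilde C$ on the surface $S$, and the second is surjective because $f$ has connected fibers — indeed it is precisely the surjection $\pi_1(S)\twoheadrightarrow\pi_1(C)$ of the sequence (\ref{top. SES}). Hence the monodromy image of $\tilde f$ equals $\rho(\pi_1(C))$, the monodromy image of $f$, which is exactly (2).

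The Bertini genericity and the Lefschetz input are standard; the only point that needs care is to arrange the genericity of $\tilde C$ (smoothness, irreducibility, and meeting every fiber of $f$) simultaneously with keeping $\tilde C$ an ample divisor on $S$, so that Lefschetz still provides the needed $\pi_1$-surjectivity onto $\pi_1(S)$ — treating ``not contained in a fiber'' via the self-intersection comparison, rather than a bare dimension count, is what makes these requirements compatible. Beyond this bookkeeping I do not anticipate a genuine obstacle.
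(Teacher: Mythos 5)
Your proposal is correct and follows essentially the same route as the paper: take a general hyperplane section $\tilde C$ of $S$ in a projective embedding, form $\tilde S = \tilde C\times_C S$ with the tautological (diagonal) section, and get the monodromy statement from the Lefschetz hyperplane theorem giving $\pi_1(\tilde C)\twoheadrightarrow\pi_1(S)\twoheadrightarrow\pi_1(C)$. Your self-intersection argument for why $\tilde C$ is not contained in a fiber is a nice explicit justification of the paper's assertion that $f|_{\tilde C}$ is non-constant, but it is the same proof.
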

\begin{proof}[Sketch of a proof]
	The idea is similar to that of the moduli construction. First embed $X$ into some projective space $\mb{P}^N$. Then a general hyperplane section $\tilde{B}$ will be smooth and projective and admits a non-constant and hence finite map onto $C$. Then $\tilde{X}$ can be constructed as the fiber product of $\tilde{B}$ and $X$ over $B$ and the algebraic section comes from the identity map $\tilde{B}\to \tilde{B}$ and the natural inclusion map $\tilde{B}\to S$. By the Lefschetz hyperplane theorem, we know that $\pi_1(\tilde{B})$ surjects onto $\pi_1(X)$ and hence onto $\pi_1(B)$ and the monodromy representation factors through this surjection. This proves the statement on the monodromy homomorphism.
\end{proof}


\section{Algebraic sections to family of Jacobians}\label{section:family of Jacobians}

\subsection{Family of Jacobians and injectivity}
Results in this section holds more generally for any abelian schemes over a curve, so we first work in that generality. Let $p\colon \mc{A}\to B$ be an abelian scheme over a smooth projective curve $B$ and let $q\colon \mc{A}^\vee=\Pic^0_{\mc{A}/B}\to B$ be the associated dual abelian scheme. Associated to the map $p\colon \mc{A}\to B$ is a short exact sequence of topological fundamental groups $$1\to H_1(\mc{A}_b,\Z)\to \pi_1(\mc{A})\to \pi_1(B)\to 1.$$ Given an algebraic section $s\colon B\to \mc{A}$, we get a group theoretic splitting of this short exact sequence. It is well known that isomorphism classes of group theoretic splittings are parametrized by the cohomology group $H^1(\pi_1(B), H_1(\mc{A}_b,\Z))$ \cite[Chapter 8, Theorem 1.3]{Lan96}, and so the map $\sec_{\text{ab}}$ may be rewritten as $$\sec_{\text{ab}}\colon H^0(B,\mc{A})\to H^1(\pi_1(B), H_1(\mc{A}_b,\Z)),$$ where we view $\mc{A}$ as a sheaf over $B$ and $H^0(B,\mc{A})$ is the group of algebraic sections of $p\colon \mc{A}\to B$. We would like to relate $\sec_{\text{ab}}$ with a boundary map in some long exact sequence of cohomology groups coming from $q\colon \mc{A}^\vee\to B$ which we now explain. 

Consider the short exact sequence $0\to R^1q_*\Z(1)\to R^1q_*\mc{O}\to (\mc{A}^\vee)^\vee=\mc{A}\to 0$ induced by the exponential exact sequence. Over each $b\in B$, this short exact sequence is simply the universal covering map $H^1(\mc{A}_b^\vee,\mc{O})\to \mc{A}_b$ of the fiber $\mc{A}_b$ with kernel $H_1(\mc{A}_b^\vee,\Z)$. Taking the long exact sequence in cohomology, we get $$\dots \to H^0(B, R^1q_*\mc{O})\to H^0(B, \mc{A})\xrightarrow{\psi}H^1(B, R^1q_*\Z(1))\to H^1(B, R^1q_*\mc{O})\to \dots$$ Recall that $H^1(\mc{A}^\vee, \Z)=H_1(\mc{A},\Z)$, and since everything here is a $K(\pi, 1)$-space, we know that $H^1(B, R^1q_*\Z(1))$ and $H^1(\pi_1(B), H_1(\mc{A}_b,\Z))$ are free abelian groups of the same rank. We claim that there exists an explicit isomorphism $F\colon H^1(\pi_1(B), H_1(\mc{A}_b,\Z))\to H^1(B, R^1q_*\Z(1))$ such that the following diagram commutes: 
\[\begin{tikzcd}
	{H^0(B, \mc{A})} & {H^1(\pi_1(B), H_1(\mc{A}_b,\Z))} \\
	{H^1(B, R^1q_*\Z(1))}
	\arrow["{\sec_{\text{ab}}}"', from=1-1, to=1-2]
	\arrow["\psi", from=1-1, to=2-1]
	\arrow["F", dashed, from=1-2, to=2-1]
\end{tikzcd}\]
To construct this isomorphism $F$, first observe that there is a natural inclusion map $\iota\colon H^0(B,\mc{A})\to H^0(B,(\mc{A})^{\text{cont}})$, where $(\mc{A})^{\text{cont}}$ is the sheaf of continuous sections from $B$ to $\mc{A}$, and the map $\sec_{\text{ab}}$ evidently factors through $\iota$. Now we claim that $\psi$ factors through $\iota$ as well. Consider the following commutative diagram of short exact sequences: 
\[\begin{tikzcd}
	0 & {R^1q_*\Z(1)} & {R^1q_*\mc{O}} & {\mc{A}} & 0 \\
	0 & {R^1q_*\Z(1)} & {(R^1q_*\mc{O})^{\text{cont}}} & {(\mc{A})^{\text{cont}}} & 0
	\arrow[from=1-1, to=1-2]
	\arrow[from=1-2, to=1-3]
	\arrow["{=}", from=1-2, to=2-2]
	\arrow[from=1-3, to=1-4]
	\arrow[from=1-3, to=2-3]
	\arrow[from=1-4, to=1-5]
	\arrow[from=1-4, to=2-4]
	\arrow[from=2-1, to=2-2]
	\arrow[from=2-2, to=2-3]
	\arrow[from=2-3, to=2-4]
	\arrow[from=2-4, to=2-5]
\end{tikzcd}\]
Taking long exact sequence in cohomology, we get the following commutative diagram
\[\begin{tikzcd}
	\dots & {H^0(B, \mc{A})} & {H^1(B, R^1q_*\Z(1))} & {H^1(B, R^1q_*\mc{O})} & \dots \\
	\dots & {H^0(B,(\mc{A})^{\text{cont}})} & {H^1(B, R^1q_*\Z(1))} & {H^1(B, (R^1q_*\mc{O})^{\text{cont}})}=0 & 
	\arrow[from=1-1, to=1-2]
	\arrow["\psi", from=1-2, to=1-3]
	\arrow["\iota", from=1-2, to=2-2]
	\arrow[from=1-3, to=1-4]
	\arrow["=", from=1-3, to=2-3]
	\arrow[from=1-4, to=1-5]
	\arrow[from=1-4, to=2-4]
	\arrow[from=2-1, to=2-2]
	\arrow["{\psi_{\text{cont}}}", from=2-2, to=2-3]
	\arrow[from=2-3, to=2-4]
\end{tikzcd}\]
This shows that $\psi$ factors through $\iota$. Therefore, it's enough to construct an isomorphism $F$ such that the following diagram commutes: 
\[\begin{tikzcd}
	{H^0(B, (\mc{A})^{\text{cont}})} & {H^1(\pi_1(B), H_1(\mc{A}_b,\Z))} \\
	{H^1(B, R^1q_*\Z(1))}
	\arrow["{(\sec_{\text{ab}})_{\text{cont}}}"', from=1-1, to=1-2]
	\arrow["\psi_{\text{cont}}", from=1-1, to=2-1]
	\arrow["F", dashed, from=1-2, to=2-1]
\end{tikzcd}\]

Furthermore, since $(R^1q_*\mc{O})^{\text{cont}}$ is a fine sheaf, we know $H^1(B,(R^1q_*\mc{O}^{\text{cont}}))=0$ and therefore the map $\psi_{\text{cont}}$ is surjective. On the other hand, because $p\colon \mc{A}\to B$ is a map of $K(\pi,1)$-spaces, we know that the map $(\sec_{\text{ab}})_{\text{cont}}$ is also surjective. This suggests the following definition of the map $F$: for any cohomology class $[s]\in H^1(\pi_1(B), H_1(\mc{A}_b,\Z))$, represent it by a continuous section $s\colon B\to \mc{A}$, and define $F([s])=\psi_{\text{cont}}(s)$.
\begin{lemma}\label{sheaf cohomology and group cohomology}
	The map $F$ constructed as above is well-defined and is an isomorphism which makes the desired diagram commute.
\end{lemma}
\begin{proof}
	Note that by construction, if $F$ is well-defined, it automatically makes the desired diagram commute, and is automatically a surjective group homomorphism. Since any surjection between free abelian groups of the same rank is automatically an isomorphism, it's enough to show that $F$ is well-defined.
	
	To show that $F$ is well-defined, we need to show that given any pair of continuous section $s_1$ and $s_2$, if they are homotopic, then $\psi_{\text{cont}}(s_1)=\psi_{\text{cont}}(s_2)$, and if they induce conjugate group theoretic splittings, then we also have $\psi_{\text{cont}}(s_1)=\psi_{\text{cont}}(s_2)$. To this end, we need to use an explicit description of the map $\psi_{\text{cont}}$. It's well-known that $H^1(B, R^1q_*\Z(1))$ parametrizes $R^1q_*\Z(1)$-torsors on $B$ \cite[Lemma 21.4.3]{Stacks}, and by unwinding the proof of this fact (e.g. the one given in \cite{Stacks}), we see that $\psi_{\text{cont}}$ admits the following description: let $s\colon B\to \mc{A}$ be a continuous section. Then $\psi_{\text{cont}}(s)$ is the isomorphism class of $R^1q_*\Z(1)$-torsor $\mc{F}\subset (R^1q_*\mc{O})^{\text{cont}}$ defined as the subsheaf of sections in $(R^1q_*\mc{O})^{\text{cont}}$ that maps to $s$ via the surjection $R^1q_*\mc{O}\to \mc{A}^\vee$.
	
	Now suppose $s_1$ and $s_2$ are homotopic continuous sections and let $\mc{F}_1$ and $\mc{F}_2$ be the $R^1q_*\Z(1)$-torsors they induce. Explicitly, over some trivializing open subset $U$, we know that $\mc{F}_i(U)$ is the set of sections $s\colon U\to U\times H^1(\mc{A}_b^\vee,\mc{O})$ which after composing with the map $U\times H^1(\mc{A}_b^\vee,\mc{O})\to U\times \mc{A}_b$ becomes $s_i|_U$, $i=1,2$. Note that the map $U\times H^1(\mc{A}_b^\vee,\mc{O})\to U\times \mc{A}_b$ is a covering map and hence we may lift the homotopy between $s_1$ and $s_2$ to a unique homotopy between sections of $\mc{F}_1(U)$ and $\mc{F}_2(U)$. In particular, $\mc{F}_1(U)\cong \mc{F}_2(U)$. Since the isomorphisms over each open is obtained by lifting the same homotopy between $s_1$ and $s_2$, they must glue to an isomorphism $\mc{F}_1\cong \mc{F}_2$. 
	
	Finally, suppose that $s_1$ and $s_2$ induce conjugate group theoretic splittings. This means that they are conjugate via an element of $H_1(\mc{A}_b,\Z)=H^1(\mc{A}^\vee,\Z)$, and so they are related via the Deck transformation coming from $R^1q_*\Z(1)$, and so by definition, the two torsors are isomorphic, as desired. 
\end{proof}
\begin{corollary}\label{Cor: sec_ab and psi}
	The map $\sec_{\text{ab}}$ is injective and/or surjective if and only if the map $\psi$ is injective and/or surjective.
\end{corollary}

Using this corollary, we prove the following proposition:
\begin{proposition}
	Let $p\colon \mc{A}\to B$ be an abelian scheme. Then the section map $\sec_{\text{ab}}\colon H^0(B, \mc{A})\to H^1(\pi_1(B), H_1(\mc{A}_b,\Z))$ is injective if and only if the monodromy action of $\pi_1(B)$ on $H^1(\mc{A}_b,\Z)$ has no invariant factors. 
\end{proposition}
\begin{proof}
	First observe that since $\mc{A}_b$ and $\mc{A}_b^\vee$ are isogenous, the monodromy action of $\pi_1(B)$ on $H^1(\mc{A}_b,\Z)$ having no invariants is the same as the monodromy action on $H^1(\mc{A}_b^\vee,\Z)$ having no invariants.
	
	Suppose the monodromy action has no invariants. Then by Corollary \ref{Cor: sec_ab and psi}, it's enough to show that $H^0(B, R^1q_*\mc{O})=0$. Consider the Higgs bundle associated to variation of Hodge structure $R^1q_*\Z$: $$\mc{E}:=q_*\omega_{\mc{A}^\vee/B}\oplus R^1q_*\mc{O}\xrightarrow{\theta}q_*\omega_{\mc{A}^\vee/B}\oplus R^1q_*\mc{O}\oplus \omega_{B},$$ where the Higgs field $\theta$ is defined by the following two maps $$\begin{aligned}
		&q_*\omega_{\mc{A}^\vee/B}\xrightarrow{\nabla}R^1q_*\mc{O}\otimes \omega_B\\
		&R^1q_*\mc{O}\xrightarrow{\text{zero map}} q_*\omega_{\mc{A}^\vee/B}\oplus R^1q_*\mc{O}\otimes \omega_B
	\end{aligned}$$ Here $\nabla$ is induced by the Gauss-Manin connection on the flat bundle associated to $R^1q_*\Z$.
	
	Now if $H^0(B, R^1q_*\mc{O})\neq 0$, then $\mc{O}_B$ maps into $R^1q_*\mc{O}$ and hence $(\mc{O}_B,0)$ will be a sub-Higgs bundle of $(\mc{E},\theta)$. On the other hand, by a theorem of Simpson \cite[Theorem 1]{Sim91}, we know that the $(\mc{E},\theta)$ is polystable, and so $(\mc{O},0)$ must be one of the irreducible factors of $(\mc{E},\theta)$ and hence the trivial representation should appear as a sub-representation of the monodromy representation, contradicting the assumption that the monodromy has no invariants.
	
	On the other hand, suppose that the map $\psi$ is injective. Then we see that we must have an isomorphism between $H^0(B, R^1q_*\Z(1))$ and $H^0(B, R^1q_*\mc{O})$. However, the former is a discrete group and the latter is a $\C$-vector space. Hence they are isomorphic if and only if both are $0$. In particular, $H^0(B, R^1q_*\Z(1))=0$ and the monodromy representation has no invariants.
\end{proof}
We can immediately deduce the following corollary for family of Jacobians associated to a Kodaira fibration:
\begin{corollary}
\label{Cor: injectivity of phi^ab}
Let $f\colon X\to B$ be a Kodaira fibration and $p\colon \Pic^0_{X/B}\to B$ be the corresponding family of Jacobians. Then $$\sec_{\text{ab}}\colon H^0(X,\Pic^0_{X/B})\to H^1(\pi_1(B),H_1(X_b,\Z))$$ is injective if and only if the associated monodromy action on $H^1(X_b,\Z)$ has no invariants.
\end{corollary}
Furthermore, the abelian version of the section question is also related to the original topological section question for Kodaira fibrations:
\begin{corollary}
\label{injectivity of top. section question}
Let $f\colon X\to B$ be a Kodaira fibration whose monodromy action on $H^1(X_b,\Z)$ has no invariants, then the corresponding map $$\{\text{algebraic sections to }f\colon X\to B\}\to \{\text{sections of (\ref{top. SES})}\}/\text{conjugation}$$ 
is injective.
\end{corollary}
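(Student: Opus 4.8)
The plan is to reduce the injectivity of $\Phi$ for the Kodaira fibration $f:S\to C$ to the injectivity of the abelianized section map $\Phi^{ab}$ for the associated family of Jacobians, which has already been established in Corollary \ref{Cor: injectivity of phi^ab} under the hypothesis that the monodromy on $H^1(S_b,\Z)$ has no invariants. The key point is that there is a natural "abelianization" map from the set of conjugacy classes of splittings of (\ref{top. SES}) to the set of conjugacy classes of splittings of (\ref{abelian SES}), and that this abelianization map is compatible with the two section maps $\Phi$ and $\Phi^{ab}$ via the Abel-Jacobi embedding $S\hookrightarrow \Pic^0_{S/C}$ (once we fix an auxiliary algebraic section or base point to pin down the embedding fiberwise, or more invariantly, work with $\Pic^1$ or a torsor; in any case the degree-zero part $\Pic^0_{S/C}$ is what carries the group structure and the monodromy).

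Concretely, here are the steps. First I would recall the Abel-Jacobi construction relative to $C$: given a Kodaira fibration $f:S\to C$, there is a canonical $C$-morphism $\iota: S\to \Pic^1_{S/C}$ sending a point of the fiber $S_b$ to the corresponding degree-one line bundle; choosing an algebraic section of $f$ (or passing to a cover, or just using that we only care about injectivity) we may translate to land in $\Pic^0_{S/C}$. An algebraic section $s$ of $f:S\to C$ then composes with $\iota$ to give an algebraic section $\iota\circ s$ of $\pi:\Pic^0_{S/C}\to C$. Second, on the level of fundamental groups, the map $\iota$ induces the abelianization $\pi_1(S_b)\to \pi_1(S_b)^{ab}=H_1(S_b,\Z)$ on fibers and hence a morphism of short exact sequences from (\ref{top. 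SES}) to (\ref{abelian SES}); this is functorial enough that a splitting of (\ref{top. SES}) induced by $s$ maps to the splitting of (\ref{abelian SES}) induced by $\iota\circ s$, and conjugacy classes map to conjugacy classes (since $\pi_1(S_b)$ surjects onto $H_1(S_b,\Z)$, the conjugation action is compatible). Third, this gives a commutative square
\[
\begin{tikzcd}
\{\text{algebraic sections to }f\}\arrow[r,"\Phi"]\arrow[d,"\iota\circ(-)"'] & \{\text{sections of (\ref{top. SES})}\}/\text{conj}\arrow[d]\\
H^0(C,\Pic^0_{S/C})\arrow[r,"\Phi^{ab}"] & H^1(\pi_1(C),H_1(S_b,\Z))
\end{tikzcd}
\]
where the right vertical arrow is the abelianization-of-splittings map and the bottom identifies conjugacy classes of splittings of (\ref{abelian SES}) with $H^1(\pi_1(C),H_1(S_b,\Z))$ as in \S\ref{subsection:self-dual abelian scheme}. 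Fourth, I would argue the left vertical arrow $s\mapsto \iota\circ s$ is injective: two algebraic sections of $f$ agreeing after composing with the relative Abel-Jacobi map must agree, because $\iota$ is a closed immersion on each fiber (the fibers have genus $\geq 3$, in particular $\geq 1$, so Abel-Jacobi is an embedding) and injectivity on closed points of each fiber forces the sections to coincide. Finally, since $\Phi^{ab}\circ(\iota\circ(-))$ is injective — being the composite of the injective left vertical arrow with the injective $\Phi^{ab}$ of Corollary \ref{Cor: injectivity of phi^ab} — commutativity of the square forces $\Phi$ to be injective.

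The main obstacle I expect is bookkeeping around base points and the precise relationship between $\Pic^0$, $\Pic^1$, and the Abel-Jacobi map: the clean statement "sections of (\ref{abelian SES}) are a torsor under / classified by $H^1(\pi_1(C),H_1(S_b,\Z))$" uses the group structure on $\Pic^0$, whereas the natural target of Abel-Jacobi is the $\Pic^1$-torsor, so one must either fix a section of $f$ to rigidify (harmless for an injectivity statement, since injectivity can be checked after such a choice or is vacuous when no section exists) or phrase everything in terms of torsors and check the conjugacy-class compatibility by hand. A secondary subtlety is making sure the conjugation action on splittings of (\ref{top. SES}) — which is by $\pi_1(S_b)$, a nonabelian group — maps correctly to the conjugation action on splittings of (\ref{abelian SES}), which is by the abelian group $H_1(S_b,\Z)$; this is straightforward because the surjection $\pi_1(S_b)\twoheadrightarrow H_1(S_b,\Z)$ intertwines the two actions, but it should be stated carefully. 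Everything else — functoriality of the induced maps on fundamental groups, the identification of conjugacy classes of splittings with a cohomology group, and the injectivity of Abel-Jacobi on fibers — is standard.
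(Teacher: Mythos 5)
Your proposal is correct and follows essentially the same route as the paper: fix an auxiliary algebraic section to define the fiberwise Abel--Jacobi embedding $h:S\to\Pic^0_{S/C}$, note that conjugate splittings of (\ref{top. SES}) map to conjugate splittings of (\ref{abelian SES}) under abelianization, and conclude from the injectivity of $\Phi^{ab}$ established in Corollary \ref{Cor: injectivity of phi^ab}. The bookkeeping points you flag (the vacuous no-section case and the compatibility of the conjugation actions via $\pi_1(S_b)\twoheadrightarrow H_1(S_b,\Z)$) are handled in the paper exactly as you describe.
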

\begin{proof}
If $f\colon X\to B$ has no sections, then the statement is trivially true so let's assume that we have a fixed section $s_0\colon B\to X$. Then we may define a $B$-morphism $h\colon C\to \Pic^0_{X/B}$ which maps $x\in X_b$ to the divisor class $[s_0(f(x))-x]$. Note that $h$ is injective, as it's just the Abel-Jacobi map on each fiber.

This section also allows us to identify $\Pic^0_{X/B}$ and $\Pic^1_{X/B}$ and so we have the following commutative diagram:
\[\begin{tikzcd}
	1 & {\pi_1(X_b)} & {\pi_1(X)} & {\pi_1(B)} & 1 \\
	0 & {H_1(X_b,\Z)=\pi_1(X_b)^{\text{ab}}} & {\pi_1(\Pic^0_{X/B})} & {\pi_1(B)} & 1
	\arrow[from=1-1, to=1-2]
	\arrow[from=1-2, to=1-3]
	\arrow[from=1-2, to=2-2]
	\arrow[from=1-3, to=1-4]
	\arrow[from=1-3, to=2-3]
	\arrow[from=1-4, to=1-5]
	\arrow["{=}", from=1-4, to=2-4]
	\arrow[from=2-1, to=2-2]
	\arrow[from=2-2, to=2-3]
	\arrow[from=2-3, to=2-4]
	\arrow[from=2-4, to=2-5]
\end{tikzcd}\]
Let $s$ and $s'$ be two distinct algebraic sections of $f\colon X\to B$. By post-composing with $h$ and using the injectivity of $h$, we get two distinct sections $\tilde{s}$ and $\tilde{s}'$ of $\pi\colon \Pic^0_{X/B}\to B$. If $s$ and $s'$ are  conjugate via some element $g\in \pi_1(X_b)$, then $\tilde{s}$ and $\tilde{s}'$ must be conjugate via the image of $g$ in $H_1(X_b,\Z)$, contradicting Corollary \ref{Cor: injectivity of phi^ab}. Hence, the map from algebraic sections to group theoretic splittings modulo conjugation is injective as desired.
\end{proof}

\subsection{Family of Jacobians and non-surjectivity}
In this subsection, we come back to the case of family of Jacobians $p\colon \Pic^0_{X/B}\to B$ associated to a Kodaira fibration $f\colon X\to B$. Note that since Jacobians of curves are principally polarized, they are self-dual and therefore we will henceforth identify $p\colon \Pic^0_{X/B}\to B$ and $q\colon (\Pic^0_{X/B})^\vee\to B$. The goal of this subsection is to show that if $f\colon X\to B$ admits an algebraic section, then the map $\sec_{\text{ab}}\colon H^0(B, \Pic^0_{X/B})\to H^1(\pi_1(B), H_1(\mc{A}_b,\Z))$ is never surjective. 

\begin{lemma}
Suppose that $f\colon X\to B$ is a Kodaira fibration with an algebraic section. Then the map $\sec_{\text{ab}}$ is not surjective if and only if the map $\phi\colon H^1(B, R^1f_*\Z(1))\to H^1(B, R^1f_*\mc{O}_X)$ induced by the inclusion $R^1f_*\Z(1)\to R^1f_*\mc{O}_X$ is non-zero.
\end{lemma}
\begin{proof}
	By Corollary \ref{Cor: sec_ab and psi}, we see that the map $\sec_{\text{ab}}$ is surjective if and only if the map $H^1(B, R^1p_*\Z(1))\to H^1(B, R^1p_*\mc{O}_{\Pic^0_{X/B}})$ is non-zero. Now just like before, any algebraic section of $f\colon X\to B$ induces an inclusion $h\colon X\to \Pic^0_{X/B}$ and hence morphisms of sheaves $R^1p_*\Z(1)\to R^1f_*\Z(1)$ and $R^1p_*\mc{O}_{\Pic^0_{X/B}}\to R^1f_*\mc{O}_X$. By checking locally, one can verify that these two morphisms of sheaves are in fact isomorphisms and therefore we can identify the map $H^1(B, R^1p_*\Z(1))\to H^1(B, R^1p_*\mc{O}_{\Pic^0_{X/B}})$ with the map $H^1(B, R^1f_*\Z(1))\to H^1(B, R^1f_*\mc{O}_X)$ as desired.
\end{proof}

Then it's enough to understand the map $\phi\colon H^1(B, R^1f_*\Z(1))\to H^1(B, R^1f_*\mc{O}_X)$. The first step is to compute the degree of the vector bundle $R^1f_*\mc{O}_X$. This computation is probably known to experts but we could not find a reference:
\begin{lemma}\label{GRR computation}
	The vector bundle $R^1f_*\mc{O}_X$ is of negative degree.
\end{lemma}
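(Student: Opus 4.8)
The plan is to compute $\deg R^1 f_*\mc{O}_S$ outright by Grothendieck--Riemann--Roch, which reduces the Lemma to the positivity of a single self-intersection number on $S$. Apply GRR to the structure sheaf along the smooth proper morphism $f\colon S\to C$, of relative dimension $1$. Set $\omega\defeq c_1(\omega_{S/C})\in H^2(S;\Z)$; the relative tangent sheaf is a line bundle with $c_1=-\omega$, so in $H^*(S;\Q)$ one has $\td(T_{S/C})=1-\tfrac12\omega+\tfrac1{12}\omega^2$ (higher terms vanish since $\dim S=2$). As $\ch(\mc{O}_S)=1$, GRR gives $\ch(Rf_*\mc{O}_S)=f_*\bigl(1-\tfrac12\omega+\tfrac1{12}\omega^2\bigr)$. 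Since $f_*$ lowers cohomological degree by $2$, and using $f_*\omega=\deg\omega_F=2g-2$ on a fibre $F$ together with $f_*(\omega^2)=\omega_{S/C}^2$ (the self-intersection of the divisor class $\omega_{S/C}$ on the surface $S$), this reads
\[
\ch(Rf_*\mc{O}_S)=-(g-1)+\tfrac1{12}\,\omega_{S/C}^2\,[\mathrm{pt}]\ \in\ H^0(C)\oplus H^2(C).
\]
On the other hand $R^0f_*\mc{O}_S=\mc{O}_C$ and $R^1f_*\mc{O}_S$ is locally free of rank $g$ (constant fibre cohomology), so $\ch(Rf_*\mc{O}_S)=(1-g)-\deg(R^1f_*\mc{O}_S)[\mathrm{pt}]$. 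Comparing $H^2$-components (the $H^0$-parts agree automatically) yields
\[
\deg R^1f_*\mc{O}_S\ =\ -\tfrac1{12}\,\omega_{S/C}^2 .
\]
Equivalently, relative Serre duality gives $R^1f_*\mc{O}_S\cong(f_*\omega_{S/C})^{\vee}$, so this is the classical identity $\deg f_*\omega_{S/C}=\tfrac1{12}\,\omega_{S/C}^2$.

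It then remains to show $\omega_{S/C}^2>0$, equivalently $\deg f_*\omega_{S/C}>0$. This is where the hypothesis that $f$ is a Kodaira fibration — in particular \emph{non-isotrivial} — enters: by Arakelov's positivity theorem, a non-isotrivial fibration of curves of genus $\ge 2$ over a curve has Hodge bundle of strictly positive degree. Alternatively, since all fibres are smooth one has $e(S)=e(C)\,e(F)=4(g(C)-1)(g-1)$, and since $(f^*K_C)^2=0$ and $K_{S/C}\cdot f^*K_C=(2g-2)(2g(C)-2)=e(S)$, one gets $\omega_{S/C}^2=K_S^2-2e(S)=3\sigma(S)$; the positivity of the signature $\sigma(S)$ of a Kodaira fibration is classical (Kodaira, Atiyah). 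Either way $\deg R^1f_*\mc{O}_S=-\tfrac1{12}\,\omega_{S/C}^2<0$.

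The main obstacle is precisely this last step. The Riemann--Roch computation is purely formal and uses nothing beyond $f$ being a smooth family of curves; in particular for an isotrivial family such as a product $C\times F$ one has $\omega_{S/C}^2=0$, so the strict inequality cannot come from the Riemann--Roch bookkeeping and must be imported from an input that genuinely registers the variation of the family — Arakelov's inequality, or equivalently the non-degeneracy of the associated period map / variation of Hodge structure.
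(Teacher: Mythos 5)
Your proof is correct and follows essentially the same route as the paper: Grothendieck--Riemann--Roch reduces everything to $\deg R^1f_*\mc{O}_S=-\tfrac{1}{12}\,\omega_{S/C}^2=-\tfrac{1}{4}\sigma(S)$, and the paper likewise concludes by invoking the positivity of the signature of a Kodaira fibration via the Hirzebruch signature formula. The Arakelov-positivity endgame you offer first (which correctly isolates non-isotriviality as the essential input) is a valid alternative, but the signature argument you label as the ``alternative'' is precisely the paper's own proof.
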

\begin{proof}
Let $f\colon X\to B$ be a Kodaira fibration such that the fiber $X_b$ has genus $g$ and the base $B$ has genus $h$. By Grothendieck-Riemann-Roch, we know that $$\ch(f_!(\mathcal{O}_X))=f_*(\ch(\mathcal{O}_X)\cdot \td_{X/B}),$$ where $\ch$ denotes the Chern character, and $\td_{X/B}$ is the Todd class of the relative tangent bundle $\mathcal{T}_{X/B}$. Since we are in the relative curve setting, we know that the higher derived pushforward $R^if_*(\mathcal{O}_X)$ vanishes for all $i\geq 2$, so we can rewrite the the left hand side of the equation to get
$$\begin{aligned}
    \ch(f_!(\mathcal{O}_X))&=\ch(f_*\mathcal{O}_X)-\ch(R^1f_*(\mathcal{O}_X))\\
    &=\ch(\mathcal{O}_B)-\ch(R^1f_*(\mathcal{O}_X))\\&=1-(\rk(R^1f_*(\mathcal{O}_X))+c_1(R^1f_*(\mathcal{O}_X))+\dots).
\end{aligned}$$ 

On the other hand, since $\ch(\mathcal{O}_X)=1$, we see that the right hand side is simply $$\begin{aligned}
f_*\td_{X/B}&=f_*(\td(\mathcal{T}_{X/B}))\\
&=f_*\left(1+\frac{c_1(\mathcal{T}_{X/B})}{2}+\frac{c_1^2(\mathcal{T}_{X/B})+c_2(\mathcal{T}_{X/B})}{12}+\dots\right).
\end{aligned}$$ As $\mathcal{T}_{X/B}$ is a line bundle, we know that it has no higher Chern classes. It follows then $$\deg R^1f_*(\mathcal{O}_X)=c_1(R^1f_*(\mathcal{O}_X))=-f_*\left(\frac{c_1^2(\mathcal{T}_{X/B})}{12}\right).$$ 

Thus, it's enough to compute $f_*(c_1^2(\mathcal{T}_{X/B}))$. Since $\mathcal{T}_{X/B}=\Omega_{X/B}^\vee$, we know that $c_1^2(\mathcal{T}_{X/B})=c_1^2(\Omega_{X/B})$ so we can work with the relative differential. Consider the following short exact sequence $$0\to f^*\Omega_B\to \Omega_X\to \Omega_{X/B}\to 0.$$ By taking the wedge power, we get the following isomorphism $$\wedge^2\Omega_X\cong f_*\Omega_B\otimes \Omega_{X/B}.$$ Since $c_1$ is a group homomorphism, we know that $c_1(\wedge^2\Omega_X)=c_1(f_*\Omega_B)+c_1(\Omega_{X/B})$. Then $$\begin{aligned}c_1(\Omega_{X/B})^2&=(c_1(\wedge^2\Omega_X))^2-2c_1(\wedge^2\Omega_X)\cdot c_1(f^*\Omega_B)+(c_1(f^*\Omega_B))^2\\
&=K_X^2-2K_X\cdot c_1(f^*(K_B))+(c_1(f^*(K_B))^2)
\end{aligned}$$ where $K_X$ is a canonical divisor on $X$, and $K_B$ is a canonical divisor on $B$. Now since $c_1$ is functorial and $f^*$ is a ring homomorphism, we know that $$(c_1(f^*(K_B)))^2=f^*(c_1(K_B)^2).$$ Because $B$ is a curve, this has to vanish. It follows then $$(c_1(\Omega_{X/B}))^2=K_X^2-2K_X\cdot f^*K_B$$ and hence $$f_*((c_1(\Omega_{X/B}))^2)=f_*(K_X^2)-2f_*(K_X\cdot f^*K_B).$$ Since we may compute degrees both before and after pushing-forward, we know that $f_*(K_B^2)=K_B^2$. To understand the last term, we first use projection formula to write it as $$2f_*(K_X\cdot f^*K_B)=2f_*(K_X)\cdot K_B.$$ Now $\deg f_*(K_X)=K_X\cdot X_b$, and because $K_X\cdot X_b=\deg K_X|_{X_b}=\deg K_{X/B}|_{X_b}$ for any generic fiber $X_b$, we see that $K_X\cdot X_b=2g-2$,. Now since $K_B$ has degree $2h-2$, we see that $$f_*((c_1(\Omega_{X/B}))^2)=K_X^2-8(g-1)(h-1).$$ Hence, we have $$\deg R^1f_*\mathcal{O}_X=c_1(R^1f_*\mathcal{O}_X)=\frac{-K^2_X+8(g-1)(h-1)}{12}=\frac{-K^2_X+2\chi_X}{12},$$ where $\chi_X$ is the Euler characteristics of $X$. Finally, by the signature formula of Hirzebruch, Atiyah and Singer, we know that the signature $\sigma(X)$ of $X$ is precisely given by $$\sigma(X)=\frac{1}{3}(K^2_X-2\chi_X).$$ Since Kodaira fibrations necessarily have positive signatures \cite[Corollary 42]{Cat17}, we may conclude that $\deg R^1f_*\mc{O}_X<0$.
\end{proof}
\begin{corollary}
\label{dim of H^1}
$\dim H^1(B, R^1f_*\mc{O}_X)>3$.
\end{corollary}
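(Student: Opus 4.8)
The plan is to combine the degree computation of Lemma \ref{GRR computation} with Riemann--Roch for vector bundles on the curve $C$ and the genus bounds of Kas. First I would record that, since $f$ is smooth and proper with fibers smooth projective curves of genus $g$, cohomology and base change shows that $R^1f_*\mc{O}_S$ is a vector bundle on $C$ of rank $g$. Write $h$ for the genus of $C$; by Kas's theorem \cite[Theorem 1.1]{Kas68} we have $g\geq 3$ and $h\geq 2$, so in particular $g(h-1)\geq 3$.

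Next I would apply the Riemann--Roch theorem on $C$ to the bundle $R^1f_*\mc{O}_S$, which gives
$$\dim H^0(C,R^1f_*\mc{O}_S)-\dim H^1(C,R^1f_*\mc{O}_S)=\deg R^1f_*\mc{O}_S+g(1-h).$$
Rearranging and using $\dim H^0(C,R^1f_*\mc{O}_S)\geq 0$ yields
$$\dim H^1(C,R^1f_*\mc{O}_S)\geq -\deg R^1f_*\mc{O}_S+g(h-1).$$
By Lemma \ref{GRR computation} the degree of $R^1f_*\mc{O}_S$ is negative, hence $-\deg R^1f_*\mc{O}_S\geq 1$, and combining this with $g(h-1)\geq 3$ we conclude $\dim H^1(C,R^1f_*\mc{O}_S)\geq 4>3$, as desired.

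I do not expect any real obstacle here: the substantive input is Lemma \ref{GRR computation} (whose proof already rests on Grothendieck--Riemann--Roch, the Hirzebruch signature formula, and the positivity of the signature of a Kodaira fibration), and what remains is a one-line Riemann--Roch estimate on a curve together with the elementary genus bounds $g\geq 3$, $h\geq 2$. If one wished to sharpen the bound, it would be natural to also invoke the vanishing $H^0(C,R^1f_*\mc{O}_S)=0$ under a no-invariants hypothesis, exactly as in the proof of the injectivity of $\Phi^{ab}$ via Simpson's polystability theorem, but this refinement is not needed for the stated inequality.
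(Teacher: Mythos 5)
Your proof is correct and follows essentially the same route as the paper: Riemann--Roch on $C$ for the rank-$g$ bundle $R^1f_*\mc{O}_S$, combined with the negativity of its degree from Lemma \ref{GRR computation} and the genus bounds $g\geq 3$, $h\geq 2$. The only cosmetic difference is that you invoke integrality to get $-\deg\geq 1$ and conclude $\dim H^1\geq 4$, whereas the paper keeps $-\deg>0$ and the discarded $H^0$ term to get the strict inequality $>3$ directly; both are valid.
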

\begin{proof}
By Riemann-Roch, we know that $$\dim H^1(B, R^1f_*\mc{O}_X)=-\left(\deg R^1f_*\mc{O}_X+\rk(R^1f_*\mc{O}_X)(1-h)-\dim H^0(B,R^1f_*\mc{O}_X)\right).$$ Lemma \ref{GRR computation} says that $-\deg R^1f_*\mc{O}_X>0$. Furthermore, it's well known (e.g.\cite[Theorem 1.1]{Kas68}) that the base curve of a Kodaira fibration has genus at least $2$ and the fiber has genus at least $3$, so we know that $$\dim H^1(B, R^1f_*\mc{O}_X)>\rk(R^1f_*\mc{O}_X)(h-1)\geq 3\cdot 1=3,$$ as desired.
\end{proof}
\begin{proposition}
	The map $\phi\colon H^1(B, R^1f_*\Z(1))\to H^1(B, R^1f_*\mc{O}_X)$ is non-zero. 
\end{proposition}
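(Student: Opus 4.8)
The plan is to show the equivalent statement that the $\C$-linear map $\Theta_\C\colon H^1(C,R^1f_*\C)\to H^1(C,R^1f_*\mc{O}_S)$ obtained by complexifying $\Theta$ is nonzero; this suffices because the target is a $\C$-vector space, so $\Theta=0$ if and only if $\Theta_\C=0$, and $H^1(C,R^1f_*\Z)\otimes_\Z\C\cong H^1(C,R^1f_*\C)$. I would realize $\Theta_\C$ inside $H^2(S,-)$ using the Leray spectral sequences of $f\colon S\to C$ for the sheaves $\underline{\C}_S$ and $\mc{O}_S$. On the Betti side $f$ is smooth and projective, so by Deligne's degeneration theorem the Leray spectral sequence degenerates at $E_2$; thus the Leray filtration on $H^2(S,\C)$ has associated graded $H^0(C,R^2f_*\C)\oplus H^1(C,R^1f_*\C)\oplus H^2(C,\C)$, with the two outer summands one-dimensional (here $R^2f_*\Z\cong\Z_C$, since the fibers are connected oriented surfaces and the monodromy is orientation preserving). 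On the coherent side $R^2f_*\mc{O}_S=0$ because $f$ has relative dimension one, and $H^2(C,\mc{O}_C)=0$ because $C$ is a curve, so that spectral sequence also degenerates and the only surviving $E_\infty$-term in total degree $2$ is $E_\infty^{1,1}=H^1(C,R^1f_*\mc{O}_S)$; hence $H^2(S,\mc{O}_S)\cong H^1(C,R^1f_*\mc{O}_S)$, concentrated in Leray degree one.

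The inclusion $\underline{\C}_S\hookrightarrow\mc{O}_S$ induces a morphism between these two spectral sequences which on the $(1,1)$-entries of the $E_\infty$-pages is, up to a nonzero scalar coming from the normalization of the exponential sequence, the map $\Theta_\C$. Hence $\Theta_\C$ is the map induced on $\Gr^1_L$ by the natural map $\phi\colon H^2(S,\C)\to H^2(S,\mc{O}_S)$. Since $S$ is a smooth projective surface, the Hodge decomposition $H^2(S,\C)=H^{2,0}(S)\oplus H^{1,1}(S)\oplus H^{0,2}(S)$ identifies $\phi$ with the projection onto $H^{0,2}(S)=H^2(S,\mc{O}_S)$, so $\phi$ is surjective. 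As $\phi$ preserves the Leray filtrations and $H^2(S,\mc{O}_S)$ is concentrated in Leray degree one, the image of $\Theta_\C$ equals $\phi\big(F^1_L H^2(S,\C)\big)$; and $F^1_L H^2(S,\C)$ has codimension $\dim H^0(C,R^2f_*\C)=1$ in $H^2(S,\C)$, so, $\phi$ being onto, this image has codimension at most $1$ in $H^2(S,\mc{O}_S)\cong H^1(C,R^1f_*\mc{O}_S)$. By Corollary \ref{dim of H^1} this space has dimension $>3$, and no subspace of codimension at most $1$ in it vanishes; therefore $\Theta_\C\neq 0$, which is what we wanted.

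The step that needs genuine care is the compatibility claimed at the start of the second paragraph --- that the induced morphism of Leray spectral sequences really is $\Theta_\C$ on the $E_\infty^{1,1}$ spots, and that those spots are indeed $H^1(C,R^1f_*\C)$ and $H^1(C,R^1f_*\mc{O}_S)$ --- which is exactly where the $E_2$-degeneration on the Betti side and the collapse on the coherent side enter; everything after that is bookkeeping with the two Leray filtrations plus the crude bound of Corollary \ref{dim of H^1}. An alternative route, avoiding Deligne's theorem but using more Hodge theory, is to argue directly that $H^1(C,R^1f_*\Q)$ carries a pure Hodge structure of weight two whose Hodge filtration is computed by the filtered de Rham complex of the Gauss--Manin bundle, that $\Theta_\C$ is then the projection onto $H^{0,2}$, and that $H^{2,0}\cong H^0(C,f_*\omega_{S/C}\otimes\omega_C)\cong H^1(C,R^1f_*\mc{O}_S)^{\vee}$ by Serre duality on $C$ together with relative duality for $f$; this is nonzero by Corollary \ref{dim of H^1}, hence so is $H^{0,2}=\overline{H^{2,0}}$.
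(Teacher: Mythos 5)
Your proof is correct and follows essentially the same route as the paper's: realize the map as the $(1,1)$-graded piece of the Hodge-theoretic surjection $H^2(S,\C)\twoheadrightarrow H^2(S,\mc{O}_S)$ via the Leray spectral sequences for $\underline{\C}_S$ and $\mc{O}_S$, and then use Corollary \ref{dim of H^1} to rule out the image being zero. The only (harmless) difference is that you work directly with the Leray filtration's graded pieces and so get a codimension-$\leq 1$ bound without invoking the algebraic section to split the filtration, whereas the paper uses the section to embed $H^1(C,R^1f_*\C)$ as a codimension-$2$ subspace of $H^2(S,\C)$.
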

\begin{proof}
	Let $\Phi\colon H^2(X, \Z(1))\to H^2(X, \mc{O}_X)$ be the map induced by the exponential map $\Z(1)\to \mc{O}_X$. We know that both $H^2(X, \Z(1))$ and $H^2(X, \mc{O}_X)$ are equipped with compatible Leray filtrations and the map $\phi \colon  H^1(X, R^1f_*\Z(1))\to H^1(X, R^1f_*\mc{O}_X)$ is induced by $\Phi$ between the associated graded pieces of the Leray filtrations. Because the map $f\colon X\to B$ admits a section, the Leray filtration splits, and so we may view $H^1(B,R^1f_*\Z(1))$ (resp. $H^1(B,R^1f_*\mc{O}_X)$) as a subgroup of $H^2(B,\Z(1))$ (as a subspace of $H^2(S,\mc{O}_X)$). As the map $\Phi$ factors through $H^2(X,\C)$, we have the following commutative diagram:
	\[\begin{tikzcd}
	{H^2(X,\Z(1))} & {H^2(X,\C)} & {H^2(X,\mc{O}_X)} \\
	{H^1(B,R^1f_*\Z(1))} && {H^1(B,R^1f_*\mc{O}_X)}
	\arrow[from=1-1, to=1-2]
	\arrow[from=1-2, to=1-3]
	\arrow[from=2-3, to=1-3]
	\arrow[from=2-1, to=1-1]
	\arrow["\phi"', from=2-1, to=2-3]
\end{tikzcd}\]

Note that the map $H^2(X,\C)\to H^2(X,\mc{O}_X)$ is the projection map induced by the Hodge decomposition on $H^2(X,\C)$ and in particular this map is surjective. Now as $H^0(B,R^2f_*\mc{O}_X)=H^2(B, f_*\mc{O}_X)=0$, we know that $H^2(X,\mc{O}_X)\cong H^1(B,R^1f_*\mc{O}_X)$. It follows from Corollary \ref{dim of H^1} that $\dim H^2(X,\mc{O}_X)>3$. On the other hand, we also have $\dim H^0(B, R^2f_*\Z(1))=\dim H^2(B, f_*\Z(1))=1$, so we see that $H^1(B, R^1f_*\Z(1))$ is a lattice inside $H^2(X, \C)$ which generates a subspace of codimension $2$.Then for the projection map from $H^2(X,\C)\to H^2(X,\mc{O}_X)$ to be surjective, its restriction to $H^1(B, R^1f_*\Z(1))$ must be non-zero and hence the map $\phi$ must be non-zero.
\end{proof}
This proves the main claim of this subsection:
\begin{corollary}
\label{surjectivity}
Let $f\colon X\to B$ be a Kodaira fibration with an algebraic section, and $p\colon \Pic^0_{X/B}\to B$. The map $\sec_{\text{ab}}$ is never surjective for these families of Jacobians.
\end{corollary}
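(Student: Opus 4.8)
The plan is to deduce the corollary formally from the long exact cohomology sequence of the uniformization sequence, Lemma~\ref{tildephi}, and the Proposition immediately preceding. First I would recall that, by Lemma~\ref{tildephi}, the image of $\Phi^{ab}$ coincides—under the canonical identification $H^1(\pi_1(C),H_1(S_b,\Z))\cong H^1(C,R^1\pi_*\Z)$—with the image of the boundary map $\tilde{\Phi}\colon H^0(C,\Pic^0_{S/C})\to H^1(C,R^1\pi_*\Z)$ arising from the short exact sequence $0\to R^1\pi_*\Z\to R^1\pi_*\mc{O}\to \Pic^0_{S/C}\to 0$. Hence $\Phi^{ab}$ is surjective if and only if $\tilde{\Phi}$ is surjective.

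Next I would invoke exactness. In the long exact sequence
\[
\cdots\to H^0(C,\Pic^0_{S/C})\xrightarrow{\tilde{\Phi}}H^1(C,R^1\pi_*\Z)\xrightarrow{\Psi}H^1(C,R^1\pi_*\mc{O})\to\cdots,
\]
the image of $\tilde{\Phi}$ is exactly $\ker\Psi$, so $\tilde{\Phi}$ is surjective precisely when $\Psi=0$. It therefore suffices to show $\Psi\neq 0$. Here I would use the distinguished algebraic section $s_0$, which furnishes the $C$-morphism $h\colon S\to\Pic^0_{S/C}$ and hence canonical isomorphisms $R^1\pi_*\Z\cong R^1f_*\Z$ and $R^1\pi_*\mc{O}\cong R^1f_*\mc{O}_S$; one checks that these intertwine $\Psi$ with the map $\Theta$ of the preceding Proposition, since both are the connecting maps induced by $\Z\hookrightarrow\mc{O}$. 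As that Proposition shows $\Theta\neq 0$, we conclude $\Psi\neq 0$, so $\tilde{\Phi}$—and therefore $\Phi^{ab}$—fails to be surjective.

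The substantive content has in fact already been carried out upstream: the sign of $\deg R^1f_*\mc{O}_S$ via Grothendieck--Riemann--Roch and the Hirzebruch signature formula (Lemma~\ref{GRR computation}), the ensuing dimension bound $\dim H^1(C,R^1f_*\mc{O}_S)>3$ (Corollary~\ref{dim of H^1}), and the Hodge-theoretic argument that $\Theta$ must be non-zero for the surjection $H^2(S,\C)\twoheadrightarrow H^2(S,\mc{O}_S)$ to hit everything. So the only point that still requires care is the identification of the two boundary maps; concretely, one wants to know that the uniformization sequence for $\pi\colon\Pic^0_{S/C}\to C$ pulls back, along $h$, to the exponential/Leray sequence for $f\colon S\to C$ that defines $\Theta$. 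This is the step I would expect to demand the most attention, though it reduces to a diagram chase once the isomorphisms $R^i\pi_*(-)\cong R^if_*(-)$ induced by $h$ are in place.
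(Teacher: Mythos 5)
Your proposal is correct and follows the same route as the paper: Lemma~\ref{tildephi} identifies $\Phi^{ab}$ with the boundary map $\tilde{\Phi}$, exactness of the long exact sequence gives $\operatorname{im}\tilde{\Phi}=\ker\Psi$, and the isomorphisms $R^1\pi_*\Z\cong R^1f_*\Z$, $R^1\pi_*\mc{O}\cong R^1f_*\mc{O}_S$ induced by $h$ reduce everything to the non-vanishing of $\Theta$ established in the preceding Proposition. The paper leaves this chain implicit (the corollary is stated with no separate proof), so your write-up is if anything more explicit than the original, including your correct identification of the transfer along $h$ as the one step needing a diagram chase.
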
 

\begin{remark}
	In the case where $f\colon X\to B$ has an algebraic section, one may identify $\Pic^0_{X/B}$ with $\Pic^1_{X/B}$, and hence our results shows that the topological section question has a negative answer for $\Pic^1_{X/B}\to B$ in the case where the associated Kodaira fibration has a section. This in fact differs from the universal case: the universal family of moduli space of degree $1$ line bundles $p'\colon \Pic^1_{\mc{C}_g/\mc{M}_g}\to \mc{M}_g$ does satisfies the topological section question when $g\geq 3$ in a trivial way, i.e., there is no topological section to $p'$ (this is first proven by Morita when $g\geq 9$; see \cite[Corollary 3, Theorem 4]{Mor86}. The strengthened result is proven in \cite[Theorem 5.1.11]{LLSS}).
\end{remark}

\section{Weak section conjecture and counterexample to surjectivity}
In this section, we study the surjectivity of the topological section map $\sec_{\text{top}}$ for Kodaira fibrations $f\colon X\to B$. We first adapt an argument of Stix \cite[Theorem 31]{Sti10} to show that just like in the arithmetic case, the surjectivity problem can be reduced to the case of Kodaira fibrations with no algebraic sections. We've chosen to highlight the main ingredients that make this strategy work so we will first work in a fairly general setting.

Let $f\colon X\to B$ be a smooth projective map with connected fibers between smooth projective varieties. Assume that we have a short exact sequence of topological fundamental groups \begin{equation}\label{top. SES 2} 1\to \pi_1(X_b)\to \pi_1(X)\to \pi_1(B)\to 1\end{equation} where $X_b$ is the fiber over $b\in B$. In this general setting, we have the following two claims:
\begin{statement}[Weak topological section conjecture]
\label{Claim 1}
	The map $X\to B$ admits an algebraic section if and only if the associated short exact sequence of topological fundamental groups split.
\end{statement}
\begin{statement}[Surjectivity of top. section question]
\label{Claim 2}
	The section map $$\sec_{\text{top}}:\{\text{algebraic sections to }f:X\to B\}\to \{\text{splittings of (\ref{top. SES 2})}\}/\text{conjugation}$$ is surjective.
\end{statement}
We prove the following proposition:
\begin{proposition}
\label{Prop: weak=surjectivity}
	Assume that
	\begin{enumerate}
		\item $\pi_1(X_b)$ is residually finite.
		\item The set $X'(B)$ of algebraic sections of $X'\to B$ is finite for every finite \'{e}tale connected cover $X'\to X$ such that the composed map $X'\to B$ has connected fibers.
	\end{enumerate}
	
	Then Statement \ref{Claim 2} being true for $X\to B$ is equivalent to Statement \ref{Claim 1} being true for all finite \'{e}tale connected covers $X'$ of $X$ such that the composed map $X'\to B$ has connected fibers.
\end{proposition}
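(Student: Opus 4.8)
The plan is to follow the template of \cite[Theorem 31]{Sti10}, adapted to the topological setting. The non-trivial implication is that Statement \ref{Claim 1} for all the relevant covers $X'$ of $X$ implies Statement \ref{Claim 2} for $X\to B$; the converse is essentially formal (a splitting of \eqref{top. SES 2} that lifts to an algebraic section already witnesses both the "only if" direction for $X'=X$ and, after base change, the general case, since a section of $X'\to B$ gives a section of $X\to B$ whose associated conjugacy class of splittings refines). So I would first dispatch the easy direction quickly, then concentrate on producing, from a splitting $s:\pi_1(B)\to\pi_1(X)$, an algebraic section of $f$.

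Fix a conjugacy class of splittings $[s]$ of \eqref{top. SES 2}. The image $H:=s(\pi_1(B))$ is a subgroup of $\pi_1(X)$ mapping isomorphically onto $\pi_1(B)$, hence of infinite index (as $\pi_1(X_b)$ is infinite — here the fibers are positive-genus curves, or more generally one uses that the fiber group is non-trivial and residually finite). The key device is: since $\pi_1(X_b)$ is residually finite, for any finite subset one can find a finite-index characteristic subgroup of $\pi_1(X_b)$ avoiding it, and since $\pi_1(X_b)$ is normal in $\pi_1(X)$, such subgroups can be taken normal in $\pi_1(X)$ as well. This produces a cofinal system of finite \'etale covers $X_N\to X$ (corresponding to $N\trianglelefteq\pi_1(X)$ with $N\subseteq\pi_1(X_b)$) for which $X_N\to B$ still has connected fibers (because $\pi_1(X_b)/N$ surjects onto the relevant quotient — one checks $\pi_1(X_N)\to\pi_1(B)$ is still surjective with fiber group $N$-reduced, which is automatic). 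For each such $N$, the subgroup $HN/N\subseteq \pi_1(X)/N=\pi_1(X_N/\text{(deck)})$... more precisely, the splitting $s$ composed with $\pi_1(X)\to\pi_1(X)/N$ gives a splitting of $1\to\pi_1(X_b)/N\to\pi_1(X)/N\to\pi_1(B)\to 1$, and the preimage of $s(\pi_1(B))$ in $\pi_1(X)$ defines a finite \'etale connected cover $X'_N\to X$ with $X'_N\to B$ a fiber bundle admitting a splitting of its homotopy sequence — so by Statement \ref{Claim 1} applied to $X'_N$, the map $X'_N\to B$ has an algebraic section $\sigma_N$.

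Now I would run the limit argument. Composing $\sigma_N$ with $X'_N\to X$ yields an algebraic section $\tau_N:B\to X$ of $f$, and by construction the conjugacy class of the splitting induced by $\tau_N$ agrees with $[s]$ modulo $N$. As $N$ ranges over the cofinal system, the $\tau_N$ all lie in the \emph{finite} set of algebraic sections of $f:X\to B$ — wait, more carefully: hypothesis (2) gives finiteness of sections of each $X'_N\to B$, hence there are only finitely many $\tau_N$ that arise, i.e. for some fixed algebraic section $\tau$ of $f$ we have $\tau=\tau_N$ for cofinally many $N$. Then the splitting induced by $\tau$ agrees with $[s]$ modulo $N$ for all these $N$, and since $\bigcap N\cap\pi_1(X_b)=\{1\}$ by residual finiteness, the conjugacy classes of these splittings in $\pi_1(X)$ coincide: $[\tau]=[s]$. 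This proves surjectivity of $\Phi$.

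The main obstacle I anticipate is the bookkeeping in the middle step: verifying that the covers $X'_N\to X$ produced from the splitting really do have the property that $X'_N\to B$ has \emph{connected} fibers (so that hypotheses (1)–(2) and Statement \ref{Claim 1} genuinely apply to them), and that the induced splitting of $\pi_1(X'_N)\to\pi_1(B)$ is the right one to feed into Statement \ref{Claim 1}. Connectedness of fibers amounts to the surjectivity of $\pi_1$ of a fiber of $X'_N$ onto the appropriate quotient, which follows because the cover was built by pulling back along a subgroup containing the image of the fiber group's quotient; I would spell this out with a diagram chase on the three-term sequences. The residual finiteness of $\pi_1(X_b)$ (automatic for surface groups, hence for Kodaira fibrations) is exactly what makes the intersection-of-kernels argument close the loop, and hypothesis (2) — which for Kodaira fibrations is McMullen's finiteness theorem \cite{McM00}, and here one needs it for the covers $X'_N$, which are themselves Kodaira fibrations over $B$ — is what upgrades "agrees modulo every $N$" to "equal". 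Finally, to deduce Cor. \ref{Cor: weak=surjectivity for Kodaira fibrations} one simply observes that a connected finite \'etale cover $S'$ of $S$ with $S'\to C$ having connected fibers is again a Kodaira fibration, so both hypotheses hold, and the finite \'etale connected covers of $S$ with connected fibers over $C$ are cofinal among all the $X'_N$'s needed above.
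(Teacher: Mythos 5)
Your strategy is essentially the one the paper uses (following Stix): build a descending system of neighbourhoods of the splitting from characteristic finite-index subgroups of $\pi_1(X_b)$ supplied by residual finiteness, apply Statement \ref{Claim 1} to each to get algebraic sections, and use finiteness of the section sets to pass to a limit. Your construction of the covers $X'_N$ (corresponding to $N\cdot s(\pi_1(B))\subseteq \pi_1(X)$) and the check that $X'_N\to B$ has connected fibers are correct. The gap is in the endgame. After pigeonholing in the finite set $X(B)$ you obtain a single algebraic section $\tau$ lifting to $X'_{N_k}$ for cofinally many $k$, and you assert that the splitting induced by $\tau$ "agrees with $[s]$ modulo $N$" and that $\bigcap N=\{1\}$ then forces $[x_\tau]=[s]$. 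But the splitting induced by an algebraic section is only well defined up to $\pi_1(X_b)$-conjugacy, and the existence of a lift of $\tau$ to $X'_{N_k}$ only tells you that \emph{some} conjugate $g_k x_\tau g_k^{-1}$ has image inside $N_k s(\pi_1(B))$, with a conjugator $g_k$ that a priori depends on $k$. "Conjugate to $s$ modulo every $N_k$" with varying conjugators does not formally yield "conjugate to $s$": a naive limit only produces a conjugator in $\varprojlim \pi_1(X_b)/N_k$ rather than in $\pi_1(X_b)$. This is precisely the local-versus-global conjugacy issue that section-conjecture arguments must avoid, so the step as written is not justified.

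The repair — and the way the paper organizes the proof so that the issue never arises — is to take the inverse limit one level up: the sets $X'_{N}(B)$ are finite and non-empty, with transition maps given by composing with the covering maps $X'_{N'}\to X'_{N}$ for $N'\subseteq N$, so $\varprojlim X'_{N}(B)\neq \emptyset$. An element of this limit is a \emph{compatible} system of algebraic sections; it determines a single $\tau\in X(B)$ together with compatible lifts to all the $X'_N$, hence a single conjugator $g$ with $x_\tau(\pi_1(B))\subseteq g\bigl(\bigcap_N N\, s(\pi_1(B))\bigr)g^{-1}=g\,s(\pi_1(B))\,g^{-1}$, and then $[x_\tau]=[s]$ because both images project isomorphically onto $\pi_1(B)$. (In the paper this is packaged as: $\pi_1(X_x)=x(\pi_1(B))$ by the Malcev-type argument, and $x$ is geometric iff some section lies in the image of $X_x(B)=\varprojlim X'(B)\to X(B)$.) One smaller remark: your "easy" direction also silently needs Lemma \ref{lemma: lifting algebraic sections} — surjectivity of $\Phi$ produces an algebraic section of $X\to B$ inducing the given splitting, and one must then lift it to $X'$, which works exactly because $X'$ is a neighbourhood of that splitting.
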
 
To distinguish an algebraic section of $f\colon X\to B$ and a group theoretic section of (\ref{top. SES 2}), we will denote the former $s$ and the latter $x$. 
\begin{definition}[Neighbourhood of a section $x$]
	Let $x\colon \pi_1(B)\to \pi_1(X)$ be a group theoretic splitting of the short exact sequence of topological fundamental groups (\ref{top. SES}). Then a \textit{neighbourhood} of $x$ is a finite \'{e}tale connected cover $S'$ of $S$ such that $S'\to C$ has connected fibers and the finite index subgroup $\pi_1(S')\subset \pi_1(S)$ contains the image $x(\pi_1(C))$ of the section $x$.
\end{definition}
Note that given a neighbourhood of $x$, we get a lift of $x$ to a group theoretic section $x'\colon \pi_1(B)\to \pi_1(X')$ of the short exact sequence of fundamental groups associated to $X'\to B$. Furthermore, if we post-compose $x'$ with the natural inclusion map $\pi_1(X')\to \pi_1(X)$, we recover the section $x$ so one may alternatively define a neighbourhood of $x$ as a pair $(X',x')$ of finite \'{e}tale covers $S'$ with connected fibers over $B$ and a group theoretic section $x'$ which descends to $x$.

\begin{lemma}
\label{lemma: lifting algebraic sections}
	Let $x=x_s$ be a geometric section, i.e., it's induced by some algebraic section $s\colon B\to X$, then a neighbourhood of $x$ is the same as pair $(X',x_{s'})$, where $X'$ is a finite \'{e}tale connected cover of $X$ with connected fibers over $B$, and $x_{s'}$ is a group theoretic section induced by some algebraic section $s'\colon B\to X$ that is a lift of $s$.
\end{lemma}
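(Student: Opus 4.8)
The plan is to match the two descriptions of a neighbourhood of $x=x_s$ by combining the lifting criterion for covering spaces with the fact that finite \'{e}tale covers of a complex variety are automatically algebraic. Write $p:X'\to X$ for the covering map. One direction is formal and I would dispatch it first: an algebraic section $s':B\to X'$ with $p\circ s'=s$ induces a group theoretic section $x_{s'}:\pi_1(B)\to\pi_1(X')$ whose post-composition with the inclusion $\pi_1(X')\hookrightarrow\pi_1(X)$ is $x_s=x$, so $x(\pi_1(B))=x_{s'}(\pi_1(B))\subseteq\pi_1(X')$ and $(X',x_{s'})$ is a neighbourhood of $x$ in the sense of the definition.

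For the substantive direction I would start from a neighbourhood $(X',x')$ of $x=x_s$, fix a basepoint $b_0\in B$, put $x_0=s(b_0)$, and choose the lift $x_0'\in X'$ of $x_0$ so that $p_*\pi_1(X',x_0')$ is exactly the distinguished finite-index subgroup $\pi_1(X')\subseteq\pi_1(X)$ through which $x'$ factors. Because $(X',x')$ is a neighbourhood of $x_s$, the image $s_*\pi_1(B,b_0)=x(\pi_1(B,b_0))$ lies inside $p_*\pi_1(X',x_0')$, so the lifting criterion for covering spaces (valid since $B$ is a connected, locally contractible space) produces a continuous lift $\tilde s:B\to X'$ of $s$ with $\tilde s(b_0)=x_0'$. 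Next I would upgrade $\tilde s$ to a morphism of varieties: pulling back $p$ along $s$ gives a finite \'{e}tale cover $q:Y:=B\times_X X'\to B$ of which $(\mathrm{id}_B,\tilde s)$ is a continuous section; the connected component $Y_0\subseteq Y$ containing its image is an open-closed, hence algebraic, subscheme, and (a section of an \'{e}tale map being an open immersion) $q|_{Y_0}:Y_0\xrightarrow{\sim}B$ is an isomorphism. Thus $s':=(\text{projection }Y\to X')\circ(q|_{Y_0})^{-1}$ is an algebraic section $B\to X'$ lifting $s$ with underlying continuous map $\tilde s$.

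Finally I would identify $x_{s'}$ with $x'$ and read off bijectivity. With the basepoints as above, $x_{s'}$ and $x'$ are both group theoretic splittings of $1\to\pi_1(X'_b)\to\pi_1(X')\to\pi_1(B)\to 1$ whose composites with the injection $\pi_1(X')\hookrightarrow\pi_1(X)$ both equal $x=x_s$; injectivity of that map forces $x_{s'}=x'$. Conversely $s'$ is the unique lift of $s$ taking $b_0$ to $x_0'$, since a continuous section of a covering map over a connected base is pinned down by a single value, so the assignments $(X',x')\mapsto(X',s')$ and $(X',s')\mapsto(X',x_{s'})$ are mutually inverse. I expect the one delicate point to be precisely this last paragraph's basepoint bookkeeping --- arranging that ``$x'$ descends to $x$'' holds on the nose rather than merely up to conjugation, so that injectivity of $\pi_1(X')\hookrightarrow\pi_1(X)$ may be applied; the topology and the algebraic geometry involved are otherwise standard. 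Alternatively, one can carry the entire argument out with $\pi_1(X'_b)$-conjugacy classes of sections throughout, using that such a class determines the lift $s'$ up to deck transformations of $X'\to B$.
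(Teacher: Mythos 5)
Your proof is correct and uses essentially the same idea as the paper: the containment $x_s(\pi_1(B))\subseteq\pi_1(X')$ forces the pullback cover $B\times_X X'\to B$ to acquire a component mapping isomorphically to $B$ (the paper phrases this as the induced $\pi_1(B)$-action on $\pi_1(X)/\pi_1(X')$ having a fixed point), which yields the algebraic lift $s'$. Your write-up is somewhat more complete than the paper's --- you also verify the converse direction, the algebraicity upgrade, and the basepoint bookkeeping needed to identify $x_{s'}$ with $x'$ via injectivity of $\pi_1(X')\hookrightarrow\pi_1(X)$ --- but the underlying mechanism is identical.
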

\begin{proof}
	Recall that a finite \'{e}tale connected covers is the same as a finite set with a transitive $\pi_1(X)$ action. In this case, the finite set is given by the  set $\pi_1(X)/\pi_1(X')$ of cosets of $\pi_1(X')$. Using the section $x_s\colon \pi_1(B)\to \pi_1(X)$, we get an induced action of $\pi_1(B)$ on this set. Since $\pi_1(X')$ contains $\pi_1(B)$, it follows that this action has a fixed point. Therefore, the cover that corresponds to this action of $\pi_1(B)$ is disconnected and has a copy of $B$. Hence, we may lift the section $s$ to an algebraric section $s'\colon B\to X$. 
\end{proof}

Given a group theoretic splitting $x\colon \pi_1(B)\to \pi_1(X)$, let $X_x$ be the pro-\'{e}tale cover of $X$ defined by the limit of the projective system $(X'\to X)$, where $X'$ runs over all neighbourhoods of $x$. 
\begin{lemma}
\label{lemma: sections with the same neighbourhood}
	Let $x_1$ and $x_2$ be two group theoretic sections. Suppose $\pi_1(X_b)$ is residually finite. Then $X_{x_1}=X_{x_2}$ if and only if they are conjugate to each other.
\end{lemma}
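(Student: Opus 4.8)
The plan is to reduce the statement to an elementary manipulation inside $\pi_1(X) = \pi_1(X_b) \rtimes x(\pi_1(B))$, the semidirect product coming from the splitting $x$; the crux is to identify the pro-\'etale cover $X_x$, up to isomorphism of covers of $X$, with the subgroup $Q_x := x(\pi_1(B))$ itself. First I would note that, since $x$ is a splitting, a connected finite \'etale cover $X' \to X$ is a neighbourhood of $x$ exactly when $\pi_1(X') \supseteq Q_x$: the condition that $X' \to B$ have connected fibres, i.e.\ that $\pi_1(X_b)\cdot\pi_1(X') = \pi_1(X)$, is then automatic because $\pi_1(X_b)\cdot Q_x = \pi_1(X)$. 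Moreover any such $H := \pi_1(X')$ decomposes as $H = L \rtimes Q_x$ with $L := H \cap \pi_1(X_b)$ a finite-index subgroup of $\pi_1(X_b)$ normalized by $Q_x$, and conversely every $Q_x$-invariant finite-index subgroup of $\pi_1(X_b)$ arises this way. Hence $X_x$ is the pro-object attached to the cofiltered family of all finite-index subgroups of $\pi_1(X)$ containing $Q_x$, whose intersection is $N_x := \bigcap_{X'}\pi_1(X') = \big(\bigcap_L L\big)\rtimes Q_x$, the last intersection running over the $Q_x$-invariant finite-index subgroups of $\pi_1(X_b)$.

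The heart of the argument, and the only place the residual finiteness hypothesis enters, is to show $\bigcap_L L = 1$. Given $1 \ne k \in \pi_1(X_b)$, residual finiteness produces a finite-index subgroup $L_0 \subseteq \pi_1(X_b)$ with $k \notin L_0$, of some index $n$; since $X_b$ is a smooth projective variety, $\pi_1(X_b)$ is finitely generated and so has only finitely many subgroups of index $n$, and the intersection $L$ of all of them is a finite-index subgroup that is stable under every automorphism of $\pi_1(X_b)$ — hence $Q_x$-invariant — and still avoids $k$ since $L \subseteq L_0$. So the $Q_x$-invariant finite-index subgroups of $\pi_1(X_b)$ separate points, $N_x = Q_x = x(\pi_1(B))$, and in particular the cofiltered family of neighbourhoods of $x$ depends only on $Q_x$: conjugating $x$, equivalently $Q_x$, by any $g \in \pi_1(X)$ conjugates this whole family, hence the cover $X_x$, by $g$.

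It remains to unwind ``$X_{x_1} = X_{x_2}$'', which by the above means precisely that $Q_{x_1}$ and $Q_{x_2}$ are conjugate in $\pi_1(X)$, say $Q_{x_1} = gQ_{x_2}g^{-1}$. Writing $g = kq$ with $k \in \pi_1(X_b)$ and $q \in Q_{x_2}$, and using that $q$ normalizes $Q_{x_2}$, this becomes $Q_{x_1} = kQ_{x_2}k^{-1}$ with $k \in \pi_1(X_b)$. Now $kx_2k^{-1}$ is again a splitting of (\ref{top. SES 2}) — as $k$ lies in the kernel — with image $kQ_{x_2}k^{-1} = Q_{x_1}$; since a splitting is determined by its image (it sends each $b \in \pi_1(B)$ to the unique element of the image lying over $b$), we conclude $x_1 = kx_2k^{-1}$, so $x_1$ and $x_2$ are conjugate by an element of $\pi_1(X_b)$. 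The converse is immediate from the last sentence of the previous paragraph. I expect the only non-routine point to be the middle paragraph: the essential input beyond residual finiteness is the classical fact that a finitely generated group has finitely many subgroups of each finite index, and one should also take care over the (easy) verification that the connected-fibre condition in the definition of a neighbourhood is automatic here. Everything else is bookkeeping with the semidirect-product decomposition.
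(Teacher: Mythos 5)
Your proposal is correct and follows essentially the same route as the paper: both reduce the lemma to showing $\pi_1(X_x)=x(\pi_1(B))$, and both establish this via the Malcev-style argument that intersects all subgroups of a given finite index in the finitely generated group $\pi_1(X_b)$ to produce characteristic (hence $x(\pi_1(B))$-invariant) finite-index subgroups with trivial total intersection, using residual finiteness. Your write-up is slightly more explicit than the paper's about the final bookkeeping (reducing conjugacy in $\pi_1(X)$ to conjugacy by an element of $\pi_1(X_b)$, and the fact that a splitting is determined by its image), but the substance is identical.
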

\begin{proof}
	It's enough to show that $\pi_1(X_x)=x(\pi_1(C))$. This is the case since $X_{x_1}=X_{x_2}$ is equivalent to $\pi_1(X_{x_1})$ being conjugate to $\pi_1(X_{x_2})$. Hence, if $\pi_1(X_x)=x(\pi_1(B))$, then $\pi_1(X_{x_1})$ being conjugate to $\pi_1(X_{x_2})$ is equivalent to $x_1$ being conjugate to $x_2$.
	
	Now to see that $\pi_1(X_x)=s(\pi_1(B))$, consider all finite index subgroups $H$ of $\pi_1(X)$ containing $x(\pi_1(B))$. Observe that $\pi_1(X_x)=\bigcap H$ so it's enough to show that $x(\pi_1(B))=\bigcap H$. Since we have a section, $\pi_1(X)$ can be written as a semi-direct product $\pi_1(X)\cong \pi_1(X_b)\rtimes \pi_1(B)$. Let $N_i:=\bigcap_{[\pi_1(X_b):H]=i} H\subset \pi_1(X_b)$. Note that because $\pi_1(X_b)$ is finitely generated, it admits finitely many maps into the symmetric group $S_i$ and hence there are only finitely many index $i$ subgroups of $\pi_1(X_b)$. In particular, this intersection is finite and $N_i$ is again of finite index. Since every automorphism of $\pi_1(X_b)$ preserves the index of a subgroup, we see that $N_i$ is also characteristics. It follows that $N_ix(\pi_1(B))$ are finite index subgroups of $\pi_1(X)$. Since $\pi_1(X_b)$ is residually finite, we know that $\bigcap N_i$ is trivial. It follows that $N_ix(\pi_1(B))=x(\pi_1(B))$ and hence the intersection of all finite index subgroups of $\pi_1(X)$ containing $x(\pi_1(B))$ is $x(\pi_1(B))$ as desired.
\end{proof}
\begin{remark}
	The proof of this lemma is essentially a minor modification of Malcev's proof (for example, see \cite{MSE23}) that semi-direct products of residually finite, finitely generated groups are residually finite.
\end{remark}
Combining these two lemmas, we may give a characterization of group theoretic sections that come from algebraic geometry:
\begin{corollary}
	\label{corollary: characterization of geometric sections}
	A group theoretic section $x$ is conjugate to $x_s$ for some algebraic section $s\colon B\to X$ if and only if $s$ belongs to the image of the natural map $X_x(B)\to X(B)$, where $X_x(B)$ is the set of algebraic sections of $X_x\to B$ and $X(B)$ is the set of algebraic sections of $X\to B$.
\end{corollary}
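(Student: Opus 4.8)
The plan is to read the corollary off from the two lemmas just proved, the key inputs being the identification $\pi_1(X_x)=x(\pi_1(B))$ (established inside the proof of Lemma~\ref{lemma: sections with the same neighbourhood}) and the fact, from the same lemma, that the pro-\'etale cover $X_x$ depends only on the conjugacy class of $x$. Because of the latter, both conditions in the statement are invariant under replacing $x$ by a conjugate, so in each implication I may pass to a convenient representative.

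For ``$x$ conjugate to some $x_s$ $\Rightarrow$ $s\in\operatorname{im}(X_x(B)\to X(B))$'', I would first replace $x$ by $x_s$ (legitimate by the remark above, using also $X_x=X_{x_s}$), so that it remains to lift $s$ compatibly to every neighbourhood of $x_s$. The first step is to check that the neighbourhoods of $x_s$ form a cofiltered system: given two neighbourhoods with subgroups $H_1,H_2\subseteq\pi_1(X)$, the connected finite covering of $X$ with fundamental group $H_1\cap H_2$ is again of finite index, still contains $x_s(\pi_1(B))$, and has connected fibres over $B$ since $x_s$ is a section (so $H_1\cap H_2$ surjects onto $\pi_1(B)$), hence is a neighbourhood dominating both. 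The second step is to apply Lemma~\ref{lemma: lifting algebraic sections}: each neighbourhood $X'$ of $x_s$ carries an algebraic section lifting $s$, so the subset $L_{X'}\subseteq X'(B)$ of such lifts is nonempty, and it is finite by hypothesis~(2) of Proposition~\ref{Prop: weak=surjectivity}. The $L_{X'}$ form a cofiltered system of nonempty finite sets, so $\varprojlim_{X'}L_{X'}\neq\varnothing$; unwinding the definition of $X_x$ as the projective system of its neighbourhoods, a point of this inverse limit is exactly an algebraic section of $X_x\to B$ whose image in $X(B)$ is $s$.

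For the converse, given $\sigma\in X_x(B)$ with image $s$, I would choose basepoints at $b$, at $\sigma(b)$, and at its image $s(b)$ in $X$, so that $x_s$ (with these choices) factors as $\pi_1(B)\xrightarrow{\sigma_*}\pi_1(X_x)\hookrightarrow\pi_1(X)$. Since $\pi_1(X_x)=x(\pi_1(B))$, this presents $x_s$ as a group-theoretic section of $\pi_1(X)\twoheadrightarrow\pi_1(B)$ with image contained in $x(\pi_1(B))$; writing $x_s(\gamma)=x(\beta)$ and applying the projection gives $\gamma=\beta$, so $x_s=x$, and hence for arbitrary choices $x$ is conjugate to $x_s$.

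The only genuinely delicate point is the inverse-limit step in the first implication: one must make sure the indexing category of neighbourhoods is really cofiltered, that the transition maps $X''(B)\to X'(B)$ restrict to the sublevel sets $L_{X'}$, and that finiteness (hypothesis~(2)) is exactly what turns ``a lift exists over each neighbourhood'' into ``a compatible family of lifts exists''. Everything else is bookkeeping with basepoints and the identity $X_x(B)=\varprojlim_{X'}X'(B)$.
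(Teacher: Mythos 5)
Your proof is correct and follows essentially the same route as the paper: both directions are read off from Lemma~\ref{lemma: lifting algebraic sections} and the identification $\pi_1(X_x)=x(\pi_1(B))$ inside Lemma~\ref{lemma: sections with the same neighbourhood}. The only real difference is that in the forward direction you manufacture the compatible system of lifts via cofilteredness plus the finiteness hypothesis (a compactness argument the paper itself deploys later, in the proof of Proposition~\ref{Prop: weak=surjectivity}), whereas the paper tacitly uses that the lifts produced by Lemma~\ref{lemma: lifting algebraic sections} are already canonically compatible; both are valid.
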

\begin{proof}
	If $x\colon B\to X$ is an algebraic section, by lemma \ref{lemma: lifting algebraic sections}, it lifts to a compatible system of algebraic sections and hence it lifts to an algebraic section of $X_x\to B$.
	
	Conversely, if $s$ is in the image of $X_x(B)\to X(B)$, the section $x$ and $x_s$ have the same collection of neighbourhood and so are conjugate to each other by lemma \ref{lemma: sections with the same neighbourhood}.
\end{proof}

Now we are ready to prove Proposition \ref{Prop: weak=surjectivity}. 
\begin{proof}[Proof of Prop. \ref{Prop: weak=surjectivity}]
		First let's deduce the weak topological section conjecture for connected finite \'{e}tale covers with connected fibers over $B$ from the surjectivity of the section map for $f\colon X\to B$. Suppose there exists a connected finite \'{e}tale cover $X'\to X$ with connected fibers over $B$ and a group theoretic section $x'$. Then $x'$ descends to a group theoretic section from $\pi_1(B)\to \pi_1(X)$ and hence by the surjectivity, there exists an algebraic section from $B\to X$. By Lemma \ref{lemma: lifting algebraic sections}, we may lift this to an algebraic section from $B$ to $X'$, and hence the weak topological section question holds for $X'\to B$.

	For the other direction, by Corollary \ref{corollary: characterization of geometric sections}, it's enough to show that $X_x(B)=\varprojlim X'(B)$ is non-empty, where $X'(B)$ is the set of algebraic sections of $X'\to B$. Since every neighbourhood $X'$ has a topological section by definition, it follows from the weak topological section conjecture that $X'(B)$ is non-empty. It's finite by assumption, and therefore it's a non-empty set with a compact Hausdorff topology. Then such projective limit is always non-empty as desired.
\end{proof}

Since Kodaira fibrations satisfy the assumptions of Prop. \ref{Prop: weak=surjectivity}, we may deduce the following corollary:
\begin{corollary}
\label{Cor: weak=surjectivity for Kodaira fibrations}
	Let $f\colon X\to B$ be a Kodaira fibration. 	The surjectivity of the topological section map $\sec_{\text{top}}$ is equivalent to the weak topological section conjecture for all connected finite \'{e}tale cover $X'$ of $X$ such that $X'\to B$ has connected fibers.
\end{corollary}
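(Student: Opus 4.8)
The plan is to deduce the corollary directly from Proposition \ref{Prop: weak=surjectivity} applied to $f : S \to C$ (so $X = S$, $B = C$): a Kodaira fibration is a smooth projective morphism with connected fibers between smooth projective varieties, hence fits the general setup of that proposition, and it remains only to verify its two hypotheses.

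Hypothesis (1) asks that $\pi_1(S_b)$ be residually finite. The fiber $S_b$ is a smooth projective curve (of genus $g \ge 3$ by Kas's theorem), so $\pi_1(S_b)$ is a surface group, and surface groups are residually finite --- for instance because for $g \ge 2$ they embed discretely in $\mathrm{PSL}_2(\R)$ and finitely generated linear groups are residually finite by Malcev. So (1) is immediate.

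Hypothesis (2) asks that $S'(C)$ be finite for every connected finite \'etale cover $p : S' \to S$ for which $f' \defeq f \circ p : S' \to C$ has connected fibers. I claim that such an $f'$ is itself a Kodaira fibration, after which \cite[Corollary 4.2]{McM00} gives the finiteness of $S'(C)$. Indeed $S'$ is a smooth projective surface (finite \'etale over the smooth projective surface $S$, with the pullback of an ample class remaining ample), and each fiber $p^{-1}(S_c)$ is a smooth projective curve, connected by hypothesis; the only genuine point is non-isotriviality of $f'$. For this, note that since $p$ is finite \'etale one has $p_*\Q_{S'} \cong \Q_S \oplus V$ for a local system $V$ on $S$, whence $R^1 f_*\Q$ is a direct summand (of variations of Hodge structure over $C$) of $R^1 f'_*\Q$. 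If $f'$ were isotrivial, it would have trivial $H^1$-monodromy after a finite \'etale base change $C' \to C$; the same would then hold for the summand $R^1 f_*\Q$, so $S \times_C C' \to C'$ would be a family of curves over the compact curve $C'$ with trivial $H^1$-monodromy, hence --- lifting the period map to the Siegel upper half space, a bounded domain, and invoking the global Torelli theorem --- with constant moduli map; since $C' \to C$ is surjective this forces $f : S \to C$ isotrivial, contradicting that it is a Kodaira fibration. Thus $f' : S' \to C$ is a Kodaira fibration and $S'(C)$ is finite, so (2) holds.

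With both hypotheses verified, Proposition \ref{Prop: weak=surjectivity} applies verbatim and yields exactly that Statement \ref{Claim 2} (surjectivity of $\Phi$) for $f : S \to C$ is equivalent to Statement \ref{Claim 1} (the weak topological section conjecture) for every connected finite \'etale cover $S'$ of $S$ with $S' \to C$ of connected fibers, which is the assertion. The one genuinely non-formal ingredient --- and the step I would expect a referee to press on --- is the inheritance of non-isotriviality under finite \'etale covers with connected fibers; everything else is bookkeeping together with the two cited black boxes (residual finiteness of surface groups and McMullen's finiteness theorem).
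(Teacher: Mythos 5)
Your proof is correct and follows the same route as the paper: both deduce the corollary by verifying the two hypotheses of Proposition \ref{Prop: weak=surjectivity}, citing residual finiteness of surface groups for (1) and McMullen's finiteness theorem for (2). The only difference is that you spell out why a connected finite \'etale cover $S'\to S$ with connected fibers over $C$ is again a Kodaira fibration (the non-isotriviality argument via the direct summand $R^1f_*\Q\subset R^1f'_*\Q$), a point the paper treats as immediate.
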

\begin{proof}
	The fact that fundamental groups of surfaces are residually finite is the main theorem of \cite{Hem72} and the fact that the set of algebraic section is finite follows from the geometric Mordell conjecture proven by Manin \cite{Man63}.
\end{proof}
\begin{corollary}
	There are counterexamples to the surjectivity part of the topological section question with no algebraic sections.
\end{corollary}
\begin{proof}
	This is now a direct consequence of Corollary \ref{Cor: weak=surjectivity for Kodaira fibrations} and the counterexamples constructed by Lee and Serv\'{a}n in Appendix \ref{section: appendix A}.
\end{proof}

In fact, Proposition \ref{Prop: weak=surjectivity}, will hold in general for any non-isotrivial family $X$ of smooth projective curves over some smooth projective base $B$. More specifically, we have the following result
\begin{corollary}
	Let $f\colon X\to B$ be a smooth projective family of curves corresponding to some non-constant map from $B$ to $\mc{M}_g$. Then the surjectivity of the section map is equivalent to the weak topological section conjecture.
\end{corollary}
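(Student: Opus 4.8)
The plan is to deduce this from Proposition~\ref{Prop: weak=surjectivity} in exactly the way Corollary~\ref{Cor: weak=surjectivity for Kodaira fibrations} was deduced from it, namely by checking the two hypotheses of that proposition for $f:X\to B$. First observe that since the classifying map $B\to\mc{M}_g$ is non-constant we must have $g\geq 2$, so every fiber $X_b$ is a compact hyperbolic Riemann surface; this is what makes $\pi_1(X_b)$ centerless and hence guarantees the short exact sequence $1\to\pi_1(X_b)\to\pi_1(X)\to\pi_1(B)\to 1$, and it means $\pi_1(X_b)$ is a surface group.

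Hypothesis~(1), residual finiteness of $\pi_1(X_b)$, is then immediate: surface groups are residually finite, which is classical and is in particular contained in \cite{Hem72}. For hypothesis~(2), let $X'\to X$ be a connected finite \'etale cover for which $X'\to B$ has connected fibers. I would first check that $X'\to B$ is again a non-isotrivial smooth projective family of curves of genus $\geq 2$: its fiber $X'_b$ is a connected finite \'etale cover of $X_b$, hence smooth projective of genus $g'\geq g\geq 2$, and non-isotriviality is inherited because the monodromy $\pi_1(B)\to\mathrm{MCG}(X_b)$ of $X\to B$ has image in the subgroup of mapping classes fixing the conjugacy class of $\pi_1(X'_b)$, the natural map from that subgroup to $\mathrm{MCG}(X'_b)$ obtained by lifting mapping classes through $X'_b\to X_b$ has finite kernel (an automorphism of a surface group which is inner on a finite-index subgroup is inner, since surface groups have unique roots), and the composite is the monodromy of $X'\to B$; hence the latter still has infinite image, and for families of curves of genus $\geq 2$ infinite monodromy is equivalent to non-isotriviality.

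It then remains to see that a non-isotrivial smooth projective family of curves of genus $\geq 2$ over a smooth projective base has only finitely many algebraic sections. When $\dim B=1$ this is the geometric Mordell conjecture (Manin, Grauert, Arakelov, Parshin), which is precisely the finiteness input used via \cite{McM00} in the Kodaira case. For $\dim B>1$ one reduces to that case: restricting to a general complete-intersection curve $B'\subseteq B$, the family $X'|_{B'}\to B'$ is still non-isotrivial, so each section $s$ is infinitesimally rigid along $B'$ (the line bundle $(s|_{B'})^{*}\mathcal{T}_{X'|_{B'}/B'}$ has negative degree by the Arakelov-type rigidity of sections of non-isotrivial families), whence $H^{0}(B,s^{*}\mathcal{T}_{X'/B})=0$ and the scheme of sections is zero-dimensional; combined with the boundedness supplied by the same Arakelov-type inequalities (equivalently, by citing the higher-dimensional Shafarevich finiteness directly) this yields finiteness. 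With both hypotheses verified, Proposition~\ref{Prop: weak=surjectivity} gives the stated equivalence. The step I expect to require genuine work rather than a citation is this last one, the finiteness of sections over a higher-dimensional base; the inheritance of non-isotriviality under \'etale covers is the secondary point that still needs a careful (if routine) argument.
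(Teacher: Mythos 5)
Your overall framework is the right one (verify the two hypotheses of Proposition~\ref{Prop: weak=surjectivity}, with residual finiteness of surface groups being immediate), but the step you yourself flag as needing ``genuine work'' --- finiteness of algebraic sections over a higher-dimensional base --- is a genuine gap as written. Rigidity alone does not give finiteness: showing $H^0(B,s^*\mathcal{T}_{X/B})=0$ for every section $s$ only makes the scheme of sections zero-dimensional, and a zero-dimensional locally closed subscheme of a Hilbert scheme can still have countably infinitely many points unless one also bounds the degrees of the sections. You assert that boundedness is ``supplied by the same Arakelov-type inequalities'' or by ``citing the higher-dimensional Shafarevich finiteness directly,'' but neither is set up or referenced precisely, and this is exactly the content that would have to be proved. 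The secondary point you raise (non-isotriviality of $X'\to B$ for a finite \'etale cover $X'$) is handled correctly by your monodromy argument, but it is not where the difficulty lies.

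The paper closes the gap by a different and more elementary device that avoids any boundedness statement over $B$. Suppose $X\to B$ had infinitely many algebraic sections. For each pair of distinct sections, the locus in $B$ where they agree is a proper closed subvariety, so the union of all these agreement loci is a countable union of proper closed subvarieties. One then chooses a single smooth proper curve $C\subset B$ not contained in any of them and on which the classifying map to $\mc{M}_g$ remains non-constant; the restriction $X|_C\to C$ is then a Kodaira fibration carrying infinitely many \emph{distinct} algebraic sections, contradicting \cite[Corollary 4.2]{McM00}. In other words, the restriction to a curve is used not merely to extract rigidity (as in your argument) but to transfer the entire finiteness statement to the one-dimensional case, where McMullen's theorem already packages both rigidity and boundedness. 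If you replace your rigidity-plus-boundedness step with this choice of curve, your proof goes through.
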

\begin{proof}
	It's enough to verify that such a family of curves $X$ has only finitely many algebraic sections. If $X\to B$ has infinitely many algebraic sections, note that the locus where these infinitely many algebraic sections agree is a countable union of closed subvarieties of $B$, and hence we may find a smooth proper curve $C\subset B$ such that the restriction of all these sections are distinct. Furthermore, we may also choose $C$ such that the map from $B$ to $\mc{M}_g$ restricts to a non-constant map on $C$. This then gives us a Kodaira fibration with infinitely many algebraic sections, contradicting the geometric Mordell Conjecture \cite{Man63}.
\end{proof}

\section{Hodge theoretic section question and family of curves}\label{section: Hodge theoretic version}
In this section, we turn to the Hodge theoretic section question by replacing the topological fundamental group to a Hodge theoretic invarient, i.e. the category $\VMHS_\Z$, which is sensitive to the underlying algebraic structure of a given variety. First, we record some preliminary facts about this category and give a precise formulation of the Hodge theoretic section question. Next, we prove an injectivity for smooth projective families of curves of genus at least $1$.

\subsection{Preliminary and formulation of the question}
Let $X$ be a smooth connected variety over $\C$. Let $\VMHS_\Z(X)$ be the category of admissible, graded-polarizable, $\Z$-variation of mixed Hodge structures (VMHS) over $X$. For a precise definition of such an object, see \cite{Kas86}. Note that this is an abelian tensor category. The unit object in this category is the constant variation of mixed Hodge structure $\underline{\Z(0)}$ and by trivial objects, we mean direct sums of the unit objects.

Suppose $X$ and $Y$ are two smooth complex varieties. Any functor $F\colon \VMHS_\Z(X)\to \VMHS_\Z(Y)$ is assumed to be exact, additive $\otimes$-functor. 
\begin{remark}
	Admissibility is a technical condition on the behavior of a variation of mixed Hodge structure at infinity that will not play a role in the proof of our main injectivity result (Prop. \ref{Proposition: Hodge theoretic injectivity}). The main point is that it ensures the variation of mixed Hodge structure to have some nice properties, which all variations of mixed Hodge structure of geometric origin have. Any VMHS that comes from geometry is admissible. If one wants to ignore this technical point, one may assume that all the spaces in this section are compact, in which case all graded-polarizable variation of $\Z$-mixed Hodge structures are automatically admissible.
\end{remark}

Let $f\colon X\to B$ be a smooth projective morphism with connected fibers between two smooth connected varieties, and let $f^*\colon\VMHS_\Z(B)\to \VMHS_\Z(X)$ be the pullback functor induced by $f$. Note that it satisfies the conditions we listed above. Furthermore, if $s\colon B\to X$ is an algebraic section to $f$, we get a functor $s^*\colon \VMHS_\Z(X)\to \VMHS_\Z(B)$ such that $s^*\circ f^*=\id_{\VMHS_\Z(B)}$. We can make a formal definition:
\begin{definition}
\label{section functor}
	A functor $F\colon \VMHS_\Z(X)\to \VMHS_\Z(Y)$ is a section to $f^*$ if $F\circ f^*$ is isomorphic to the identity functor on $\VMHS_\Z(Y)$.
\end{definition}
Now we formulate the Hodge theoretic section question:
\begin{question}[Hodge theoretic section question] \hfill
\label{Question:Hodge theoretic section question}
\begin{enumerate}
	\item (injectivity) If $s_1,s_2$ are two distinct algebraic sections to $f\colon X\to B$, can the functors $s_1^*$ and $s_2^*$ be isomorphic?
	\item (surjectivity) Suppose that $F\colon \VMHS_\Z(X)\to \VMHS_\Z(B)$ is a functor which is a section to $f^*$. Then can we find an algebraic section $s\colon B\to X$ such that $F$ is isomorphic to $s^*$?
\end{enumerate}
\end{question}

\subsection{Injectivity for family of curves}
In this subsection, we show that the injectivity part of the Hodge theoretic section question has a positive answer in the case of smooth projective family of curves. Let $f\colon X\to B$ be a family of smooth projective curves of genus $g\geq 2$ over some smooth connected base $B$ (which is not assumed to be proper), and $\phi\colon B\to \mc{M}_g$ the corresponding map into $\mc{M}_g$. We have the following proposition:
\begin{proposition}
	\label{Proposition: Hodge theoretic injectivity}
	Let $f\colon X\to B$ be a family of curves as above. Then for any pair of algebraic sections $s_1,s_2\colon B\to X$, if $s_1^*$ is isomorphic to $s_2^*$ as functors from $\VMHS_\Z(X)\to \VMHS_\Z(B)$, then $s_1=s_2$. 
\end{proposition}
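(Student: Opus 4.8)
The strategy is to reconstruct the sections $s_1, s_2$ from a small amount of Hodge-theoretic data attached to them, so that an isomorphism $s_1^* \cong s_2^*$ of functors forces the data to coincide and hence $s_1 = s_2$. The natural piece of data to use is the action of the functors on the "tautological" variation on $\mathcal{X}$, namely the weight-one VMHS $\mathcal{H} = R^1f_*\Z(1)$ (or its dual $R_1f_*\Z$), and on the extension classes of sections inside the relative Jacobian. Concretely, fix the algebraic section $s_1$ and use it to form the Abel–Jacobi embedding $h\colon \mathcal{X}\to \Pic^0_{\mathcal{X}/\mathcal{B}}$, $x\mapsto [s_1(f(x))-x]$. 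Any other section $s_2$ gives, via $h$, a section $\tilde{s}_2$ of the relative Jacobian, and hence (by the classical theory recalled in \S\ref{section:family of Jacobians}) a class in $H^1(\mathcal{B}, R^1f_*\Z) = H^1(\pi_1(\mathcal{B}), H_1(\mathcal{X}_b,\Z))$, which one upgrades to an extension of VMHS
$$0 \to R_1f_*\Z \to \mathcal{E}_{s_2} \to \underline{\Z(0)} \to 0$$
on $\mathcal{B}$, i.e.\ an element of $\Ext^1_{\VMHS_\Z(\mathcal{B})}(\underline{\Z(0)}, R_1f_*\Z)$. The key point — which is where the pointed Torelli theorem of Hain and Pulte enters in the $\mathcal{B}=\mathrm{pt}$ case, cf.\ \cite{Pul88, Hai87} — is that this extension class determines the point $s_2(b)$ in each fiber, and therefore the whole section $s_2$.

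**Execution.** First I would set up, for an arbitrary section $s\colon\mathcal{B}\to\mathcal{X}$, a VMHS on $\mathcal{X}$ (or on $\mathcal{B}$) whose isomorphism class, together with the functor $s^*$, recovers the above $\Ext^1$-class; the cleanest choice is to take the VMHS $J_2(\mathcal{X}/\mathcal{B})$ coming from the degree-$\le 2$ part of the relative fundamental-group completion (the "truncated Jacobian" / $\mathbb{Z}(1)$-extension of $\pi_1$ of the fibers pointed at $s_1$), which is functorial and of geometric origin, hence lands in $\VMHS_\Z(\mathcal{X})$. Pulling this back along $s_2$ and along $s_1$ and comparing via the hypothetical isomorphism $s_1^*\cong s_2^*$ gives an equality of the two extension classes in $\Ext^1_{\VMHS_\Z(\mathcal{B})}(-,-)$; pointwise over each $b\in\mathcal{B}$ this becomes an equality of classes in the Mixed Hodge structure $J_2(\mathcal{X}_b)$ computing the pointed mixed Hodge structure on $\pi_1(\mathcal{X}_b, s_1(b))$. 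Then I invoke the pointed Torelli theorem: for a curve of genus $g\ge 2$, the point $x\in \mathcal{X}_b$ is determined by the extension class in $\Ext^1_{\MHS}(\Z(0), H_1(\mathcal{X}_b)\text{-part})$ arising from $\pi_1(\mathcal{X}_b,x)/\text{(weight }\le -3)$ — equivalently by the position of the Abel–Jacobi point together with the pointed Torelli data — so $s_2(b)=s_1(b)$ for all $b$, giving $s_1=s_2$.

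**Subtleties.** There are two main obstacles. The first, and the real content, is verifying that an isomorphism of the \emph{functors} $s_1^*$ and $s_2^*$ actually produces an equality of the relevant extension classes and not merely an abstract isomorphism of the pulled-back VMHS: an isomorphism of functors gives, for the specific object $\mathcal{E} = J_2(\mathcal{X}/\mathcal{B})$, an isomorphism $s_1^*\mathcal{E}\cong s_2^*\mathcal{E}$ that is \emph{compatible} with the functorial structure, in particular with the canonical maps from/to $f^*(\text{objects on }\mathcal{B})$ and with the weight filtration; I would extract the extension class as a morphism in the derived category of $\VMHS_\Z(\mathcal{B})$ and argue that the natural isomorphism pins it down, using that $\underline{\Z(0)}$ and $R_1f_*\Z$ are themselves (pulled back from $\mathcal{B}$, hence) fixed by the comparison. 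The second obstacle is purely about invoking pointed Torelli in families over a non-proper base $\mathcal{B}$: here I would work fiberwise, so that admissibility plays no role (as flagged in the remark preceding the proposition), and only use that the fiberwise MHS on the truncated path-torsor is of geometric origin, which it is. I expect the first obstacle — turning "isomorphic functors" into "equal extension classes" — to be the heart of the argument; everything else is either classical (pointed Torelli) or formal (functoriality of the relative bar/Jacobian construction).
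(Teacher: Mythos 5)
Your proposal follows essentially the same route as the paper: both pull back the universal extension on $\Pic^0_{\mc{X}/\mc{B}}$ along the Abel--Jacobi map based at a fixed algebraic section (equivalently, use the first canonical VMHS of Hain--Zucker, whose fiber at $y$ is $H_1(X_b,\{s_0(b),y\})$), and conclude from the injectivity of the fiberwise period map, which is just the Albanese map --- so the full pointed Torelli theorem you invoke is overkill, only Abel--Jacobi injectivity at the $J/J^2$ level is used. The subtlety you single out as the heart of the matter --- upgrading an isomorphism of \emph{functors} to an equality, rather than an abstract isomorphism, of the relevant extension classes --- is a genuine point that the paper's own proof dispatches in a single line, so flagging it is reasonable, but it does not constitute a different approach.
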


To prove this proposition, we need to find a graded-polarizable, admissible $\Z$-variation of mixed Hodge structure on $X$ whose associated period map is injective (or at least injective on each fiber). We do so by using the canonical variation of mixed Hodge structure of Hain and Zucker. We first recall some definitions and facts.

Let $X$ be a smooth algebraic variety over $\C$ endowed with the analytic topology, and let $PX$ be the space of piecewise-smooth paths in $X$ endowed with the compact open topology. The free path fibration $p\colon PX\to X\times X$ is defined as $$\begin{aligned}
	p\colon PX&\to X\times X\\
	\gamma&\mapsto (\gamma(0),\gamma(1))
\end{aligned}$$
Denote the $P_{x,y}$ the fiber of $p\colon PX\to X\times X$ over the point $(x,y)$. Now there is an isomorphism $H_0(P_{x,x},\Z)\cong \Z[\pi_1(X,x)]$. Let $J_x$ be the augmentation ideal of the group ring $\Z[\pi_1(X,x)]$. Note that $H_0(P_{x,y})$ carries a canonical left $\Z[\pi_1(X,x)]$-module structure, so we get an induced filtration $J^\bullet$ by the augmentation ideal $J_x$. 

\begin{proposition-definition}[r-th canonical VMHS, Prop. 4.20 + Def. 4.21 of \cite{HZ87}]
	Let $X$ be a smooth algebraic variety over $\C$ and $x\in X$ a fixed point. Then there exists a graded-polarizable variation $\mc{J}_x$ of mixed Hodge structure on $X$ such that for any $y \in X$, $$\mc{J}_{x,y}:=(\mc{J}_x)_y=H_0(P_{x,y},\Z)/J^{r+1}.$$ 
\end{proposition-definition}

We will in particular be interested in the case where $r=1$. In this case, we have an extension of mixed Hodge structures \cite[Prop. 5.39]{HZ87} $$0\to H_1(X,\Z)\to H_1(X,\{x,y\})\to \Z(0)\to 0$$ In particular, when $x\neq y$, we just have $H_0(P_{x,y},\Z)/J^2\cong H_1(X,\{x,y\})$. We have the following proposition which classifies such extensions:
\begin{proposition}\cite{Car80}
	Extensions of this form is classified by the Albanese $\Alb(X)$ of $X$, and the map $y\mapsto H_1(X,\{x,y\})$ agrees with the Albanese mapping with basepoint $x$: $$\begin{aligned}
		\alpha_x\colon X&\to \Alb(X):=F^1H^1(X)^\vee/H_1(X,\Z)\\
		y&\mapsto \left(\omega\mapsto\int_\gamma\omega\right)
	\end{aligned}$$ where $\gamma$ is any path from $x$ to $y$. 
\end{proposition}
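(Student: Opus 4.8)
The plan is to prove the statement in two steps: first establish the abstract classification of such extensions by the group $\Ext^1_{\MHS}(\Z(0), H_1(X,\Z))$ and identify the latter with $\Alb(X)$, and then compute the extension class of the specific mixed Hodge structure $H_1(X,\{x,y\},\Z)$ to see that it records the functional $\omega \mapsto \int_\gamma\omega$. The key input is Carlson's description of extension groups in the category of mixed Hodge structures \cite{Car80}: for a mixed Hodge structure $H$ all of whose weights are strictly negative, one has a canonical isomorphism
$$\Ext^1_{\MHS}(\Z(0), H) \cong \frac{H_\C}{F^0 H_\C + H_\Z},$$
obtained by choosing an integral lift $e_\Z \in H_\Z$ and a Hodge lift $e_F \in F^0 H_\C$ of the generator of $\Z(0)$ and recording the difference $e_\Z - e_F \in H_\C$. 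Since $H_1(X,\Z)$ has strictly negative weights (weight $-1$ when $X$ is proper, and weights $-1,-2$ in general), this formula applies with $H = H_1(X,\Z)$.

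For the first step, I would use that $H_1(X,\Z)$ is the dual mixed Hodge structure of $H^1(X,\Z)$, so that its Hodge filtration is given by annihilators: $F^0 H_1(X)_\C = \mathrm{Ann}(F^1 H^1(X)_\C)$. Consequently the tautological duality pairing identifies $H_1(X)_\C / F^0 H_1(X)_\C$ with $(F^1 H^1(X))^\vee$, and quotienting further by the image of $H_1(X,\Z)$ yields
$$\Ext^1_{\MHS}(\Z(0), H_1(X,\Z)) \cong \frac{(F^1 H^1(X))^\vee}{H_1(X,\Z)} = \Alb(X),$$
which is exactly the asserted classification.

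For the second step, I would compute the class of the extension
$$0 \to H_1(X,\Z) \to H_1(X,\{x,y\},\Z) \to \Z(0) \to 0$$
directly through Carlson's recipe. A piecewise-smooth path $\gamma$ from $x$ to $y$ defines a relative $1$-cycle whose class $[\gamma]$ is an integral lift $e_\Z$ of the generator $[y]-[x]$ of $\Z(0)$; choose also a Hodge lift $e_F \in F^0 H_1(X,\{x,y\})_\C$. The extension class is then $[\gamma] - e_F \in H_1(X)_\C$, and to read off the corresponding point of $\Alb(X)$ I pair it against a holomorphic form $\omega \in F^1 H^1(X)$. Lifting $\omega$ to a class $\tilde\omega \in F^1 H^1(X,\{x,y\})$ (possible by strictness of $F$ for the morphism of mixed Hodge structures $H^1(X,\{x,y\}) \to H^1(X)$), relative de Rham theory gives $\langle [\gamma], \tilde\omega\rangle = \int_\gamma \omega$. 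On the other hand, the perfect duality pairing $H_1(X,\{x,y\}) \otimes H^1(X,\{x,y\}) \to \Z(0)$ is a morphism of mixed Hodge structures, so $F^0 \otimes F^1$ lands in $F^1\Z(0) = 0$, forcing $\langle e_F, \tilde\omega\rangle = 0$. Hence the extension class pairs with $\omega$ to give $\int_\gamma \omega$, which is precisely $\alpha_x(y)$ evaluated on $\omega$; the dependence on the choice of $\gamma$ is absorbed by the quotient by $H_1(X,\Z)$.

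The step I expect to require the most care is the second one: one must pin down the mixed Hodge structure on the relative homology $H_1(X,\{x,y\})$, verify that the homology–cohomology duality pairing is genuinely a morphism of mixed Hodge structures (so that the type argument $F^0 \otimes F^1 \to F^1\Z(0) = 0$ is legitimate), and confirm that the integral lift $[\gamma]$ and the Hodge lift $e_F$ are compatible across the relative and absolute sequences. By contrast, the abstract classification in the first step is comparatively formal once Carlson's formula and the dual-filtration computation are in hand.
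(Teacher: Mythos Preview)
The paper does not give its own proof of this proposition: it is stated with attribution to \cite{Car80} and used as a black box. So there is no ``paper's proof'' to compare against; your proposal is a reconstruction of Carlson's original argument.

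That said, your reconstruction is correct and is essentially the standard proof. The two-step structure (Carlson's formula $\Ext^1_{\MHS}(\Z(0),H)\cong H_\C/(F^0H_\C+H_\Z)$ followed by an explicit computation of the extension class via a path $\gamma$) is exactly how this result is usually established. The identification $H_1(X)_\C/F^0H_1(X)_\C\cong (F^1H^1(X))^\vee$ via annihilators is right, and the second step is handled cleanly: the strictness argument for lifting $\omega$ to $\tilde\omega\in F^1H^1(X,\{x,y\})$ is valid because the restriction map $H^1(X,\{x,y\})\to H^1(X)$ is a surjective morphism of mixed Hodge structures, and the vanishing $\langle e_F,\tilde\omega\rangle=0$ via the type argument on the duality pairing is the standard trick. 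Your cautionary remarks at the end are well placed; the only thing to verify carefully is that the duality $H_1(X,\{x,y\})\otimes H^1(X,\{x,y\})\to\Z(0)$ is indeed a morphism of mixed Hodge structures, which follows from the functoriality of Deligne's construction applied to the pair.
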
 
\begin{theorem}\cite[Cor. 5.40]{HZ87}
	This period map $\alpha_x$ agrees with the period map for the $1$-st canonical VMHS.
\end{theorem}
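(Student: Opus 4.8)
The plan is to read off the period map of the $1$-st canonical VMHS $\mc{J}_x$ as the map sending $y$ to the isomorphism class $[\mc{J}_{x,y}]$ of the fibre as a mixed Hodge structure, and then to evaluate this class explicitly, matching it against integration of holomorphic $1$-forms. The first step is the observation that both graded pieces $H_1(X,\Z)$ and $\Z(0)$ of the extension $0\to H_1(X,\Z)\to \mc{J}_{x,y}\to \Z(0)\to 0$ are independent of $y$, so the only Hodge-theoretic datum that varies with $y$ is the extension class in $\Ext^1_{\MHS}(\Z(0),H_1(X,\Z))$. Consequently the mixed period domain for such VMHS is exactly this $\Ext$-group, and by the preceding Carlson classification it is canonically identified with $\Alb(X)$; thus the period map of $\mc{J}_x$ is literally $y\mapsto [\mc{J}_{x,y}]\in\Alb(X)$, and the theorem amounts to the identity $[\mc{J}_{x,y}]=\alpha_x(y)$.

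To compute $[\mc{J}_{x,y}]$, I would invoke Carlson's explicit formula for the class of an extension $0\to A\to E\to \Z(0)\to 0$ of mixed Hodge structures with $A$ of negative weights: the class is $e_\Z-e_F\in A_\C/(F^0A_\C+A_\Z)$, where $e_\Z\in E_\Z$ and $e_F\in F^0E_\C$ are arbitrary lifts of $1\in\Z(0)$. With $A=H_1(X,\Z)$ the duality pairing with $H^1(X)$ gives $F^0A_\C=(F^1H^1(X))^\perp$, so that $A_\C/F^0A_\C\cong (F^1H^1(X))^\vee=H^0(X,\Omega^1_X)^\vee$; evaluating the class therefore reduces to pairing $e_\Z-e_F$ against holomorphic forms $\omega\in F^1H^1(X)$, and modding out by $A_\Z=H_1(X,\Z)$ reproduces precisely the target $\Alb(X)=F^1H^1(X)^\vee/H_1(X,\Z)$.

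For $E=\mc{J}_{x,y}=H_1(X,\{x,y\})$ I would take $e_\Z=[\gamma]$, the relative homology class of a piecewise-smooth path $\gamma$ from $x$ to $y$, whose boundary $[y]-[x]$ represents the generator of $\Z(0)$; then $\langle[\gamma],\omega\rangle=\int_\gamma\omega$. The crucial point is that the $F^0$-lift contributes nothing: lifting $\omega$ to some $\tilde\omega\in F^1H^1(X,\{x,y\})$ (possible since $H^1(X,\{x,y\})\to H^1(X)$ is a surjection of mixed Hodge structures, hence strict for $F$), the dual-filtration relation $F^0H_1(X,\{x,y\})=(F^1H^1(X,\{x,y\}))^\perp$ forces $\langle e_F,\tilde\omega\rangle=0$. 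Hence the class evaluates to $\omega\mapsto\int_\gamma\omega$ modulo periods, which is exactly $\alpha_x(y)$; the degenerate case $x=y$ yields the split extension and matches $\alpha_x(x)=0$.

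The main obstacle is the careful bookkeeping of Hodge filtrations under duality rather than the topology: one must verify the orthogonality $\langle F^0H_1,F^1H^1\rangle=0$ with the correct Tate twist, and, more substantively, confirm that the Hodge filtration Hain and Zucker place on $\mc{J}_{x,y}$ via the bar/iterated-integral complex agrees in the relevant range with the classical Hodge filtration on the relative cohomology $H^1(X,\{x,y\})$ that makes the pairing a morphism of mixed Hodge structures. Once this comparison of filtrations is established, the period computation collapses to the elementary relative-period pairing $\langle[\gamma],\omega\rangle=\int_\gamma\omega$, and the identification of the period map with $\alpha_x$ follows.
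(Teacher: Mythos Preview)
The paper does not give its own proof of this statement; it simply quotes it as \cite[Cor.~5.40]{HZ87}, together with the preceding proposition of Carlson, and then uses the consequence (injectivity of the period map on a curve) as a black box. So there is no argument in the paper to compare against.

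That said, your outline is essentially the standard Carlson computation and is correct in spirit: identify the mixed period domain with $\Ext^1_{\MHS}(\Z(0),H_1(X,\Z))\cong\Alb(X)$, write the extension class as $e_\Z-e_F$, take $e_\Z=[\gamma]$, and pair against $\omega\in F^1H^1(X)$ to recover $\int_\gamma\omega$ modulo periods. The one genuinely nontrivial point you flag --- that the Hodge filtration Hain and Zucker place on $\mc{J}_{x,y}$ via the bar/iterated-integral description agrees with the classical filtration on $H_1(X,\{x,y\})$ --- is exactly what is checked in \cite{HZ87} to obtain their Cor.~5.40, so your argument would in the end be appealing to the same comparison that the paper cites.
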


\begin{corollary}
\label{cor: Injectivity of Abel-Jacobi}
	When $(X,x)$ is a curve, the $1$-st canonical VMHS on $X$ with base point $x$ has injective period map.
\end{corollary}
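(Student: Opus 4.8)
The plan is to reduce the statement to the classical Abel--Jacobi embedding theorem, using the identification of the period map recorded immediately above. First, by the cited theorem of Hain--Zucker, the period map of the $1$-st canonical VMHS on $X$ with basepoint $x$ coincides with the Albanese mapping $\alpha_x\colon X\to\Alb(X)$ appearing in Carlson's proposition. So it suffices to show that $\alpha_x$ is injective when $X$ is a smooth projective curve (of genus $g\ge 2$ in our situation; more generally $g\ge 1$ suffices).

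Next I would unwind the definition of $\alpha_x$ in the curve case. For a smooth projective curve $X$ one has $F^1H^1(X)=H^0(X,\Omega^1_X)$, so $\Alb(X)=H^0(X,\Omega^1_X)^\vee/H_1(X,\Z)$ is canonically identified with the Jacobian $\Jac(X)=\Pic^0(X)$, and under this identification $\alpha_x$ becomes the usual Abel--Jacobi map $y\mapsto[\mc{O}_X(y-x)]$ sending a point to the class of the degree-zero divisor $y-x$. This is standard for the Albanese variety of a curve: the integral description $y\mapsto\bigl(\omega\mapsto\int_\gamma\omega\bigr)$ matches the Abel--Jacobi normalization by Abel's theorem.

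Finally I would prove injectivity of $y\mapsto[\mc{O}_X(y-x)]$ directly. Suppose $\alpha_x(y)=\alpha_x(y')$ with $y\neq y'$. Then the divisor $y-y'$ is principal, so there is a nonconstant rational function $\phi$ on $X$ with $(\phi)=y-y'$; equivalently, $\phi$ defines a finite morphism $X\to\mb{P}^1$ of degree $1$, hence an isomorphism $X\cong\mb{P}^1$. This contradicts $g(X)\ge 1$. Therefore $y=y'$, and $\alpha_x$ is injective, which is exactly the assertion about the period map.

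I do not expect a genuine obstacle: once the period map has been identified with $\alpha_x$, the content is entirely classical. The only points requiring care are (i) verifying that our hypotheses force $g(X)\ge 1$ (automatic, since fibers of a Kodaira fibration have genus $\ge 3$, and in Section~\ref{section:family of curves} the curves have genus $g\ge 2$), and (ii) checking that the normalization of $\alpha_x$ in Carlson's proposition is precisely the Abel--Jacobi normalization, so that the divisor-theoretic argument applies verbatim.
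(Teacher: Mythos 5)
Your proposal is correct and matches the paper's (implicit) argument: the paper states this as an immediate corollary of the Carlson and Hain--Zucker results identifying the period map with the Albanese map $\alpha_x$, leaving the classical injectivity of the Abel--Jacobi map for a curve of genus $\ge 1$ unstated. Your divisor-theoretic verification of that injectivity (a principal divisor $y-y'$ with $y\neq y'$ would force $X\cong\mb{P}^1$) is exactly the standard fact being invoked, and your check that the relevant curves have genus $\ge 1$ is sound.
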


Now we may proceed to the proof of Proposition \ref{Proposition: Hodge theoretic injectivity}.
\begin{proof}[Proof of Prop. \ref{Proposition: Hodge theoretic injectivity}]
	If $f\colon X\to B$ has no section, then the claim is trivially true, so we may without loss of generality and assume that we have an algebraic section $s_0\colon B\to X$. Then as before we get a commutative diagram  
$$
\begin{tikzcd}
X\arrow[r,"h"]\arrow[dr,"f"]&\Pic^0_{X/B}\arrow[d,"\pi"]\\
& B
\end{tikzcd}
$$
The fibers of $\pi\colon \Pic^0_{X/B}\to B$ is $\Jac(X_b)=\Alb(X_b)$, which we may view as a mixed period domain. In particular, $\Pic^0_{X/B}$ carries a universal variation of mixed Hodge structure $\mc{U}$ such that for a given point $p\in \Jac(X_b)$, $\mc{U}|_p$ is the extension class in $\Ext^1(\Z(0),H_1(X_b))$ corresponding to $p$. Now pulling back $\mc{U}$ along $h$, we get a variation of mixed Hodge structure $\mc{J}:=h^*\mc{U}$ on $\mc{X}$, whose period map factors through $h$, and which, when restricting to the fiber $X_b$, agrees with $J_{s_0(b),y}$. By Corollary \ref{cor: Injectivity of Abel-Jacobi}, we know that $\mc{V}$ is injective on each fiber.

Therefore, if $s_1^*$ is isomorphic to $s_2^*$ as functors, then for all $b\in \mc{B}$, $s_1^*\mc{V}|_b\cong s_2^*\mc{V}|_b$ or equivalently, $\mc{V}|_{s_1(b)}\cong \mc{V}|_{s_2(b)}$. By the injectivity of the period map, we see that $s_1(b)=s_2(b)$. Therefore, the desired proposition follows immediately if one can verify that $\mc{J}$ is admissible.
\end{proof}
\begin{lemma}
	The $\Z$-VMHS $\mc{J}$ constructed in the proof above is admissible.
\end{lemma}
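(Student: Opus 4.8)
The plan is to show that $\mc{J}$ is of geometric origin and then invoke the standard fact that the underlying $\Z$-variation of a mixed Hodge module is admissible; equivalently, to reduce the statement to the admissibility of an associated normal function in the sense of M.~Saito. Concretely, since $\mc{J}=h^{*}\mc{U}$ and pullback of an admissible variation along a morphism of smooth varieties is again admissible (see \cite{Kas86}), it is enough to prove that the universal variation $\mc{U}$ on $\Pic^{0}_{\mc{X}/\mc{B}}$ is admissible.

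Write $\pi\colon\Pic^{0}_{\mc{X}/\mc{B}}\to\mc{B}$ and let $\mc{H}=R_{1}f_{*}\Z$ be the polarizable variation of Hodge structure of weight $-1$ with fibre $H_{1}(X_{b},\Z)$. By construction $\mc{U}$ sits in an extension $0\to\pi^{*}\mc{H}\to\mc{U}\to\underline{\Z(0)}\to 0$, so up to isomorphism it is determined by a single datum: the corresponding normal function, i.e. a holomorphic horizontal section $\nu_{\mc{U}}$ of the family of intermediate Jacobians $J(\pi^{*}\mc{H})\to\Pic^{0}_{\mc{X}/\mc{B}}$. Because $\mc{H}$ is pure of weight $-1$ and polarized, $J(\pi^{*}\mc{H})$ is the abelian scheme $\Pic^{0}_{\mc{X}/\mc{B}}\times_{\mc{B}}\Pic^{0}_{\mc{X}/\mc{B}}$ over $\Pic^{0}_{\mc{X}/\mc{B}}$ (first projection), and the defining property of $\mc{U}$ — its fibre over $p\in\Jac(X_{b})$ is the extension with class $p$ — translates into the statement that $\nu_{\mc{U}}$ is the diagonal section $p\mapsto(p,p)$. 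I would then conclude with two inputs: first, the equivalence (Saito) between admissibility of the extension variation $\mc{U}$ and admissibility of $\nu_{\mc{U}}$; and second, the fact that a normal function given by an \emph{algebraic} section of an abelian scheme over a smooth base is automatically admissible — over a smooth projective compactification with normal crossings boundary such a section extends, by the N\'eron mapping property, to a section of the N\'eron model, and the condition on the Hodge bundles near the boundary is guaranteed by algebraicity (equivalently, $\mc{U}$ is, up to the level-one truncation, the logarithm sheaf of the abelian scheme $\Pic^{0}_{\mc{X}/\mc{B}}\to\mc{B}$, which underlies a mixed Hodge module). Since the diagonal is such an algebraic section, $\nu_{\mc{U}}$ is admissible, hence so is $\mc{U}$, hence so is $\mc{J}=h^{*}\mc{U}$.

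A more hands-on variant avoids the logarithm sheaf: on the dense open $\mc{X}\setminus s_{0}(\mc{B})$ the variation $\mc{J}$ is the relative $H_{1}$ of the second projection $\mc{X}\times_{\mc{B}}\mc{X}\to\mc{X}$ relative to the two disjoint sections cut out by the diagonal and by the graph of $s_{0}\circ f$, hence is of geometric origin and admissible there; over $s_{0}(\mc{B})$ one has $h\circ s_{0}=\mathbf{0}$ (the zero section, as $h(s_{0}(b))=[s_{0}(b)-s_{0}(b)]=0$), so $s_{0}^{*}\mc{J}\cong\underline{\Z(0)}\oplus R_{1}f_{*}\Z$ is visibly admissible on $\mc{B}$; and one finishes by Kashiwara's criterion that admissibility may be verified after pullback to all smooth curves mapping to $\mc{X}$, splitting into curves whose image lies in $s_{0}(\mc{B})$ and curves whose image near each boundary point lies in $\mc{X}\setminus s_{0}(\mc{B})$. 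In both approaches the Hodge-theoretic substance is imported from \cite{HZ87}, \cite{Car80}, \cite{Kas86} and Saito's work; I expect the only real obstacle to be bookkeeping — matching up $\mc{U}$, its extension class and the associated normal function exactly, and checking that the N\'eron-model/compactification argument goes through over the non-proper base $\mc{B}$.
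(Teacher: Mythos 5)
Your proposal is essentially correct, and your ``hands-on variant'' is in fact the paper's own argument: the paper realizes $\mc{J}$ as $R^1(p_1)_*$ of the complex $0\to\underline{\Z}\to\underline{\Z}_{\mc{Z}_0}\oplus\underline{\Z}_{\mc{Z}_1}\to 0$ on $\mc{X}\times_{\mc{B}}\mc{X}$, where $\mc{Z}_0$ is the diagonal and $\mc{Z}_1$ the preimage of $s_0(\mc{B})$ --- i.e.\ exactly the relative $H^1$ of the two-pointed family of curves over $\mc{X}$, following Beilinson--Deligne--Goncharov; admissibility then follows because the variation is of geometric origin. The paper does not need your case split at the collision locus $s_0(\mc{B})$, since the complex $\mc{K}_s$ is defined uniformly on all of $\mc{X}\times_{\mc{B}}\mc{X}$ and its fibre there is simply the split extension; your curve-test patching is a workable but unnecessary detour. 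Your first variant --- reducing to admissibility of the universal extension $\mc{U}$ on $\Pic^0_{\mc{X}/\mc{B}}$ via $\mc{J}=h^*\mc{U}$, identifying $\mc{U}$ with the normal function given by the diagonal (algebraic) section of the abelian scheme, and invoking Saito's theorem that geometric normal functions are admissible --- is a genuinely different and valid route. It trades the explicit cosimplicial/relative-cohomology model for heavier machinery (mixed Hodge modules, N\'eron models), but it has the advantage of isolating the one piece of Hodge-theoretic input (admissibility of algebraic normal functions) and of showing that the statement depends only on the abelian scheme $\Pic^0_{\mc{X}/\mc{B}}\to\mc{B}$, not on the curve fibration; the paper's route has the advantage of exhibiting $\mc{J}$ directly as coming from geometry, which is the cleaner statement to cite.
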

This lemma, again, is automatically true when $X$ is proper. It was proven in \cite{Hai87-2} and we provide a sketch of a proof that's motivated by Beilinson-Deligne-Goncharov's construction of mixed Hodge structures on truncated fundamental groups \cite{DG05}. For a detailed description of this construction, see \cite[section 3.6]{BF}.
\begin{proof}[Sketch of a proof]
	We claim that this variation of mixed Hodge structures comes from geometry as it's the cohomology of a family of cosimplicial schemes. Since any variation of mixed Hodge structure that comes from geometry is admissible, this proves the desired claim. The main idea is to run the Beilinson-Deligne-Goncharov's construction of the mixed Hodge structure on $\Z[\pi_1(X,x,y)]/J^{r+1}$ in families.
	
	Consider the fiber product 
\[\begin{tikzcd}
	{X\times_{B}X} & {X} \\
	{X} & {B}
	\arrow["{p_2}",from=1-1, to=1-2]
	\arrow["{p_1}",from=1-1, to=2-1]
	\arrow[from=1-2, to=2-2]
	\arrow[from=2-1, to=2-2]
\end{tikzcd}\] We get a fibration $\varphi\colon X\times_B X \to B$, where over each point $b\in\mc{B}$, the fiber is given by the product of curves $\varphi^{-1}(b)=X_b\times X_b$. Let $\mc{Z}_0$ be the image of the diagonal map $\Delta\colon X\to X\times_B X$. Let $D$ be the image of the fixed section $s_0\colon B\to X$, and let $\mc{Z}_1$ be the preimage of $D$ in $X\times_B X$ under the second projection map $p_2$. Note that $\mc{Z}_0\cap \pi^{-1}(b)$ is the closed subset $Z_0\subset X_b\times X_b$ defined by $\{x_1=x_2\}$, where the $x_i$ are coordinates of $X_b\times X_b$ and $\mc{Z}_1\cap \pi^{-1}(b)$ is the closed subset $Z_1\subset X_b\times X_b$ defined by $\{x_1=s_0(b)\}$ 

Let $\underline{\Z}_{\mc{Z}_i}$ be the extension by zero of the constant sheaf on $\mc{Z}_i$ along the natural inclusion map. We can define the following complex $$\mc{K}_s\colon 0\to \underline\Z\to \underline\Z_{\mc{Z}_0}\oplus \underline\Z_{\mc{Z}_1}\to 0,$$ where the map $\underline{\Z}\to \underline\Z_{\mc{Z}_0}\oplus \underline\Z_{\mc{Z}_1}$ is given by the alternating sum of the natural restriction map. Note that if we restrict this complex to $\pi^{-1}(b)$, we recover the complex of sheaves on $X_b\times X_b$ used in Beilinson-Deligne-Goncharov's construction $${}_\bullet\mc{K}_{s(b)}\langle 1\rangle\colon 0\to \underline{\Z}\to \underline{\Z}_{Z_0}\oplus \underline{\Z}_{Z_1}\to 0.$$ Now the desired variation of mixed Hodge structure agrees with the variation of mixed Hodge structure defined on the local system $R^1(p_1)_*(\mc{K}_s)$ on $X$, whose fiber at $y\in S_b$ is given by the hypercohomology $\mathbb{H}^1(X_b,{}_y\mc{K}_{s(b)}\langle 1\rangle)$, which agrees with $H^1(X_b,\{s(b),y\})$ when $s(b)\neq y$. When $s(b)=y$, this hypercohomology becomes the split extension of $H^1(X_b,{s(b)})$ by $\underline{\Z(0)}$. 
\end{proof}
\begin{corollary}
\label{Cor. Hodge theoretic inject. for Kod. Fib}
	Let $f\colon X\to B$ be a Kodaira fibration, and $s_1,s_2$ two distinct algebraic sections of $f$. Then $s_1^*$ is not isomorphic to $s_2^*$.
\end{corollary}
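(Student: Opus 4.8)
The plan is to deduce this statement immediately from Proposition \ref{Proposition: Hodge theoretic injectivity}. A Kodaira fibration $f:S\to C$ is by definition a non-isotrivial fibration from a smooth projective surface onto a smooth projective curve all of whose fibers are smooth projective of genus $g$; by Kas's observation the fiber genus satisfies $g\geq 3$. In particular $f:S\to C$ is an instance of the situation treated in Proposition \ref{Proposition: Hodge theoretic injectivity}, namely a family of smooth projective curves of genus $g\geq 2$ over the smooth connected base $\mc{B}=C$. So the first step is simply to check that the hypotheses of that proposition are met, which they are.

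Granting this, the argument is exactly the one given in the proof of Proposition \ref{Proposition: Hodge theoretic injectivity}: if $f$ has no algebraic section the statement is vacuous, so fix one, say $s_0:C\to S$, form the fiberwise Abel--Jacobi map $h:S\to \Pic^0_{S/C}$, and let $\mc{J}=h^*\mc{U}$ be the pullback of the universal variation of mixed Hodge structure on $\Pic^0_{S/C}$. Restricted to a fiber $S_b$ this is the first canonical VMHS of Hain--Zucker with basepoint $s_0(b)$, whose period map is injective by Corollary \ref{cor: Injectivity of Abel-Jacobi}. An isomorphism $s_1^*\cong s_2^*$ of functors $\VMHS_\Z(S)\to \VMHS_\Z(C)$ then forces $\mc{J}|_{s_1(b)}\cong \mc{J}|_{s_2(b)}$ for every $b\in C$, hence $s_1(b)=s_2(b)$ for all $b$, i.e.\ $s_1=s_2$.

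There is effectively no obstacle here, since the corollary is a pure specialization and the real content is already contained in the proof of Proposition \ref{Proposition: Hodge theoretic injectivity}. It is worth recording one simplification available in this case: because $S$ and $C$ are both projective, every graded-polarizable $\Z$-VMHS in sight is automatically admissible, so the admissibility of $\mc{J}$ — the only technical point in the proof of the proposition, handled there via the Beilinson--Deligne--Goncharov construction in families — requires no argument. Thus the plan is essentially to state that the corollary follows from the proposition, pointing out that for Kodaira fibrations the admissibility lemma is free.
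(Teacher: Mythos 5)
Your proposal is correct and matches the paper, which gives no separate proof of this corollary precisely because it is the immediate specialization of Proposition \ref{Proposition: Hodge theoretic injectivity} to the case $\mc{B}=C$ (with fiber genus $g\geq 3$ by Kas's observation). Your added remark that admissibility of $\mc{J}$ is automatic here since $S$ and $C$ are projective is consistent with the paper's own remark on compact base and total space, so nothing is missing.
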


\newpage
\appendix

\section{Proof of Proposition \ref{Prop: surjectivity of top. section conjecture} part (2)}\label{section: appendix A}
\smallskip
\begin{center}by \textsc{Seraphina Eun Bi Lee and Carlos A. Serv\'{a}n}\end{center}

The purpose of this appendix is to prove Proposition \ref{Prop: surjectivity of top. section conjecture} part (2). The construction given here is an adaptation of our work \cite[Remark 6.4]{LS24}. Throughout, let $f: X \to B$ be a Kodaira fibration of fiber genus $g$ with an algebraic section $s: B \to X$.

Consider a double cover $B' \to B$ branched over two points $p_1, p_2 \in B$ and the base change 
\[
	f': X' := X \times_B B' \to B'.
\]
The smooth topology of the $\Sigma_g$-bundle $f': X' \to B'$ can be described as follows. Fix a closed disk $D^2 \subseteq B$ with interior $\mathring{D^2} \subseteq B$ and write
\[
	B_1 = B_2 = B - \mathring{D^2}.
\]
Then $B'$ can be identified smoothly with 
\[
	B' \cong B_1 \cup_{\partial} B_2,
\]
where $B_1$ and $B_2$ are glued along the boundary $\partial B_i \cong S^1$ by a reflection $r: S^1 \to S^1$. 

Similarly, let 
\[
	X_1 = X_2 = f^{-1}(B - \mathring{D^2}).
\]
Because $D^2 \subseteq B$ is contractible, the restrictions $f^{-1}(D^2) \to D^2$ and $f^{-1}(\partial D^2) \to \partial D^2$ are isomorphic to the trivial fiber bundles $\Sigma_g \times D^2 \to D^2$ and $\Sigma_g \times S^1 \to S^1$ respectively for both $i = 1, 2$. The total space $X'$ can then be identified smoothly with 
\[
	X' \cong X_1 \cup_{\partial} X_2,
\]
where $X_1$ and $X_2$ are glued along the boundary $\partial X_i \cong \Sigma_g \times S^1$ by the diffeomorphism $\id_{\Sigma_g} \times r: \Sigma_g \times S^1 \to \Sigma_g \times S^1$. In terms of these identifications, the map $f': X_1 \cup_{\partial} X_2 \to B_1 \cup_{\partial} B_2$ restricts to $f|_{X_i}: X_i \to B_i$ for each $i =1, 2$. 

By Van Kampen's theorem, there are isomorphisms of groups 
\begin{align*}
	\pi_1(B') &\cong \pi_1(B_1) *_{\pi_1(S^1)} \pi_1(B_2), \\
	\pi_1(X') &\cong \pi_1(X_1) *_{\pi_1(S^1 \times \Sigma_g)} \pi_1(X_2).
\end{align*}
Throughout this appendix, we identify $\pi_1(B_i)$ and $\pi_1(X_i)$ with their images in $\pi_1(B')$ and $\pi_1(X')$ respectively induced by the inclusions $B_i \hookrightarrow B'$ and $X_i \hookrightarrow X'$ for both $i = 1 ,2$. These identifications are well-defined because the maps $\pi_1(B_i) \to \pi_1(B')$ and $\pi_1(X_i) \to \pi_1(X')$ induced by these inclusions are injective for both $i = 1, 2$. 

Let $s_i: B_i \to X_i$ denote the restriction of $s: B \to X$ to $B_i$ for both $i = 1, 2$. 
\begin{definition}
For any $\gamma \in \pi_1(\Sigma_g)$, let $s_\gamma: \pi_1(B') \to \pi_1(X')$ be defined
\[
	s_\gamma(\ell) = \begin{cases}
	(s_1)_*(\ell) \in \pi_1(X_1) & \text{ if } \ell \in \pi_1(B_1) \\
	\gamma \cdot (s_2)_*(\ell) \cdot \gamma^{-1}\in \pi_1(X_2) & \text{ if } \ell \in \pi_1(B_2).
	\end{cases}
\]
\end{definition}
\begin{lemma}
For any $\gamma \in \pi_1(\Sigma_g)$, the map $s_\gamma$ is well-defined. Furthermore, $s_\gamma$ is a section of $(f')_*: \pi_1(X') \to \pi_1(B')$.
\end{lemma}
\begin{proof}
To check that $s_\gamma$ is well-defined, it suffices to show that
\[
	(s_1)_*(\ell) = \gamma \cdot (s_2)_*(\ell) \cdot \gamma^{-1}
\]
for $\ell$ a generator of $\pi_1(S^1) = \pi_1(\partial D^2)$. For both $i = 1, 2$, the image $s_i(\partial D^2)$ in $f^{-1}(\partial D^2) \cong \Sigma_g \times S^1$ bounds the disk $s(D^2)$ in $f^{-1}(D^2) \cong \Sigma_g \times D^2$, and so
\[
	(s_i)_*(\ell) = (1, \ell) \in \pi_1(\Sigma_g) \times \pi_1(S^1) \cong \pi_1(f^{-1}(\partial D^2)).
\]
Therefore, $(s_i)_*(\ell)$ commutes with $\gamma \in \pi_1(\Sigma_g) \leq \pi_1(f^{-1}(\partial D^2))$, and 
\[
	(s_1)_*(\ell) = (s_2)_*(\ell) = \gamma \cdot (s_2)_*(\ell) \cdot \gamma^{-1}.
\]

For any $\ell \in \pi_1(B_1)$ or for any $\ell \in \pi_1(B_2)$, compute that
\[
(f')_* \circ s_{\gamma}(\ell) = \begin{cases} f_* \circ (s_1)_*(\ell) \in \pi_1(B_1) & \text{ if }\ell \in \pi_1(B_1) \\
f_*(\gamma) \cdot f_*\circ (s_2)_*(\ell) \cdot f_*(\gamma^{-1})\in \pi_1(B_2) & \text{ if } \ell \in \pi_1(B_2).
\end{cases}
\]
Note that $f_* \circ (s_i)_* = \id_{\pi_1(B_i)}$ for both $i = 1, 2$, and that $f_*(\gamma) = 1 \in \pi_1(B_i)$ for any $\gamma \in \pi_1(\Sigma_g)$. In particular, $(f')_* \circ s_\gamma(\ell) = \ell$ for any $\ell \in \pi_1(B')$, and hence $s_\gamma$ is a section of $(f')_*$. 
\end{proof}

The following lemma provides a criterion to determine whether two group-theoretic sections $s_\gamma$ and $s_\delta$ differ by conjugation by an element of $\pi_1(\Sigma_g)$.
\begin{lemma}\label{lem:non-conj-criterion}
If $s_\gamma$ is conjugate to $s_\delta$ by an element of $\pi_1(\Sigma_g)$ then $[\delta] - [\gamma]$ is contained in $H_1(\Sigma_g, \Z)^{\pi_1(B)}$, the subgroup fixed by the monodromy action of the Kodaira fibration $f: X \to B$.
\end{lemma}
\begin{proof}
Suppose that there exists some $\beta \in \pi_1(\Sigma_g)$ such that 
\[
	s_\gamma(\ell) = \beta \cdot s_\delta(\ell) \cdot \beta^{-1} \in \pi_1(X')
\]
for all $\ell \in \pi_1(B')$. Because the maps $\pi_1(X_i) \to \pi_1(X')$ for $i =1, 2$ induced by inclusions $X_i \hookrightarrow X'$ are injective group homomorphisms, the following equalities also hold in $\pi_1(X_1)$ and $\pi_1(X_2)$: 
\begin{align}
	\label{eqn:s1} (s_1)_*(\ell) &= \beta \cdot (s_1)_*(\ell) \cdot \beta^{-1} \in \pi_1(X_1) & \text{ if } \ell \in \pi_1(B_1), \\
	\label{eqn:s2} \gamma \cdot (s_2)_*(\ell) \cdot \gamma^{-1} &= \beta \cdot (\delta \cdot (s_2)_* (\ell) \cdot \delta^{-1}) \cdot \beta^{-1} \in \pi_1(X_2) & \text{ if } \ell \in \pi_1(B_2).
\end{align}
Recall that under the identifications $X_1 = X_2$ and $B_1 = B_2$, the sections $s_1$ and $s_2$ agree. Therefore, (\ref{eqn:s2}) implies that 
\begin{align*}
	(s_1)_*(\ell) &= \gamma^{-1} \cdot \beta \cdot (\delta \cdot (s_1)_* (\ell) \cdot \delta^{-1}) \cdot \beta^{-1}\cdot \gamma \in \pi_1(X_1) & \text{ if } \ell \in \pi_1(B_1).
\end{align*}
Combining with (\ref{eqn:s1}), we obtain for all $\ell \in \pi_1(B_1)$
\[
	(s_1)_*(\ell) = (\beta^{-1} \gamma^{-1} \beta \delta) \cdot (s_1)_*(\ell) \cdot (\beta^{-1} \gamma^{-1} \beta \delta)^{-1} \in \pi_1(X_1),
\]
i.e. $(\beta^{-1} \gamma^{-1} \beta \delta) \in \pi_1(\Sigma_g)$ commutes with $(s_1)_*(\ell) \in \pi_1(X_1)$ for all $\ell \in \pi_1(B_1)$. Because the map $\pi_1(B_1) \to \pi_1(B)$ induced by the inclusion $B_1 \hookrightarrow B$ is surjective and because $s_1: B_1 \to X_1$ is the restriction of $s: B \to X$, it also holds that $(\beta^{-1} \gamma^{-1} \beta \delta) \in \pi_1(\Sigma_g)$ commutes with $s_*(\ell) \in \pi_1(X)$ for all $\ell \in \pi_1(B)$.

Let $\rho: \pi_1(B) \to \Aut(H_1(\Sigma_g, \Z))$ denote the monodromy representation of $f: X \to B$. Then $\rho$ can be described using the section $s: B \to X$ in the following way. For any $\alpha \in \pi_1(\Sigma_g)$ and any $\ell \in \pi_1(B)$, the action of $\ell$ on the homology class $[\alpha] \in H_1(\Sigma_g, \Z)$ is
\[
	\rho(\ell)([\alpha]) = [\, \underbrace{s_*(\ell)^{-1} \cdot \alpha \cdot s_*(\ell)}_{\in \pi_1(\Sigma_g) \trianglelefteq \pi_1(X)} \, ] \in H_1(\Sigma_g, \Z).
\]
Therefore, for any $\ell \in \pi_1(B)$,
\[
	\rho(\ell)([\beta^{-1} \gamma^{-1} \beta \delta]) = [\beta^{-1} \gamma^{-1} \beta \delta]
\]
because $s_*(\ell)$ commutes with $\beta^{-1} \gamma^{-1} \beta \delta$. In other words, $[\beta^{-1} \delta^{-1}\beta \gamma] \in H_1(\Sigma_g, \Z)$ is fixed under the action of $\pi_1(B)$, i.e. $[\beta^{-1} \gamma^{-1} \beta \delta] = -[\gamma] + [\delta]$ is contained in $H_1(\Sigma_g, \Z)^{\pi_1(B)}$.
\end{proof}

We apply this criterion to finish the proof of Proposition \ref{Prop: surjectivity of top. section conjecture} part (2).
\begin{proof}[Proof of Proposition \ref{Prop: surjectivity of top. section conjecture} part (2)]
For any $\gamma, \delta \in \pi_1(\Sigma_g)$, the associated sections $s_\gamma$ and $s_\delta$ are not conjugate by an
element of $\pi_1(\Sigma_g)$ if $[\gamma] \neq [\delta]$ as elements of the quotient $H_1(\Sigma_g, \Z)/H_1(\Sigma_g,
\Z)^{\pi_1(B)}$ by Lemma \ref{lem:non-conj-criterion}. By (a dual version of) Remark \ref{Remark: monodromy of Kodaira fibration}, the monodromy action $\rho: \pi_1(B) \to \Aut(H_1(\Sigma_g, \Z))$ of $f: X \to B$ is nontrivial. In other words, the quotient 
\[
	H_1(\Sigma_g, \Z)/H_1(\Sigma_g, \Z)^{\pi_1(B)}
\]
is an infinite free abelian group. Therefore, there exist infinitely many pairwise nonconjugate sections of the short exact sequence (\ref{top. SES}) of the form $s_\gamma$ with $\gamma \in \pi_1(\Sigma_g)$, and the map $\textrm{sec}_{\textrm{top}}$ has infinite codomain. On the other hand, the geometric Mordell conjecture for Kodaira fibrations \cite{Man63} says that any Kodaira fibration has finitely many algebraic sections, i.e. the map $\textrm{sec}_{\textrm{top}}$ has finite domain. Therefore, $\textrm{sec}_{\textrm{top}}$ is not surjective.
\end{proof}

\bibliographystyle{alpha}
\bibliography{bibliography-algebraicsection.bib}

\end{document}